\newtheorem{THM}{Theorem}[section]
\newtheorem*{THM*}{Theorem~\ref{main}}
\newtheorem{LEM}[THM]{Lemma}
\newtheorem{OBS}[THM]{Observation}
\newtheorem{CLAIM}{Claim}
\newtheorem{PROB}{Problem}
\newcommand\tw{\operatorname{tw}}
\newcommand\abs[1]{\lvert #1\rvert}
\newenvironment{subproof}[1][\proofname]{%
  \begin{proof}[#1]%
}{%
  \end{proof}%
}
\newcommand{\COMMENT}[1]{}
\tikzset{middledownarrow/.style={
        decoration={markings,
            mark= at position 0.4 with {\arrow{#1}} ,
        },
        postaction={decorate}
    }
}
\tikzset{middlearrow/.style={
        decoration={markings,
            mark= at position 0.6 with {\arrow{#1}} ,
        },
        postaction={decorate}
    }
}
\tikzset{middleuparrow/.style={
        decoration={markings,
            mark= at position 0.75 with {\arrow{#1}} ,
        },
        postaction={decorate}
    }
}
\tikzset{middleupuparrow/.style={
        decoration={markings,
            mark= at position 0.9 with {\arrow{#1}} ,
        },
        postaction={decorate}
    }
}
\tikzset{middleupupuparrow/.style={
        decoration={markings,
            mark= at position 0.95 with {\arrow{#1}} ,
        },
        postaction={decorate}
    }
}
\tikzset{middleupupupuparrow/.style={
        decoration={markings,
            mark= at position 1 with {\arrow{#1}} ,
        },
        postaction={decorate}
    }
}
\theoremstyle{definition}
\newtheorem{DEFN}[THM]{Definition}
\newtheorem{EX}[THM]{Example}
\numberwithin{equation}{section}
\begin{document}

\title{Fragile minor-monotone parameters under~a~random~edge~perturbation}
\author[1]{Dong Yeap Kang%
\thanks{This project has received funding from the European Research
Council (ERC) under the European Union's Horizon 2020 research and innovation programme (grant agreement no. 786198, Dong Yeap Kang). He was also supported by the Institute for Basic Science (IBS-R029-Y6).}}
\author[2]{Mihyun Kang%
\thanks{Supported in part by the Austrian Science Fund (FWF) [10.55776/F1002]. 
}
}
\author[4]{Jaehoon Kim%
\thanks{Supported by the POSCO Science Fellowship of POSCO TJ Park Foundation,  by the KAIX Challenge program of KAIST Advanced Institute for Science-X, and by the National Research Foundation of Korea (NRF) grant funded by the Korea government(MSIT) No.\ RS-2023-00210430.}}
\author[3,4]{Sang-il Oum%
\thanks{Supported by the Institute for Basic Science (IBS-R029-C1).}}
 
\affil[1]{\small Extremal Combinatorics and Probability Group, Institute for Basic Science (IBS), Daejeon, South Korea}
\affil[2]{\small Institute of Discrete Mathematics, Graz University of Technology, Graz, Austria}
\affil[3]{\small Discrete Mathematics Group, Institute for Basic Science (IBS), Daejeon, South Korea}
\affil[4]{\small Department of Mathematical Sciences, KAIST, Daejeon, South Korea}

\affil[ ]{\small Email: \texttt{dykang.math@ibs.re.kr}, 
\texttt{kang@math.tugraz.at},
\texttt{jaehoon.kim@kaist.ac.kr},
\texttt{sangil@ibs.re.kr}}

\date{\today}
\maketitle

\begin{abstract}
We conduct a quantitative analysis of how many random edges need to be added to a base graph~$H$ in order to significantly increase natural minor-monotone graph parameters of the resulting graph~$R$.
Specifically, we show that
if $R$ is obtained from a connected graph $H$ by adding only a few random edges, 
the tree-width, genus, and Hadwiger number of $R$ become very large, 
irrespective of the structure of~$H$.

\bigskip\noindent \emph{Keywords: Random graphs, randomly perturbed graphs, tree-width, genus, Hadwiger number}

\end{abstract}

\section{Introduction}\label{sec:intro}
Since the seminal work of Erd\H{o}s and R\'enyi~\cite{erdos1960}, various random graph models have been introduced and investigated. The most well-known model is the binomial random graph $G(n,p)$, a graph on the vertex set $[n]:=\{1,2,\ldots,n\}$ where each unordered pair of distinct vertices is connected by an edge with probability $p\in [0,1]$ independently. In recent years, randomly perturbed graphs have received considerable attention (see \cite{ADHL2022, ADHL2022_2, AHK2023, AHK2025, balogh2019, boettcher2018, boettcher2019, BPSS2023, bohman2003, das2019, DMT2020, hahn_klimroth2020, HMT2021, joos2019, KKL20,  krivelevich2017, parczyk2019, powierski2019} and references therein). In this paper, we study a randomly perturbed graph defined as follows: Given an $n$-vertex graph $H:=H_n$, called the \emph{base graph}, we form a new graph $R:=H \cup G(n,p)$ by adding to $H$ the edges of $G(n,p)$ defined on the same vertex set as $H$.
We extend and strengthen a result of Dowden, Kang, and Krivelevich~\cite{dowden2019} about the `fragile genus'  property of~$R$ to additional minor-monotone graph parameters.

Our first result says that the tree-width $\tw(\cdot)$, 
the genus $g(\cdot)$, and the Hadwiger number $h(\cdot)$ are `fragile' in the sense that adding only a few random edges to a base graph can cause a drastic increase in these parameters. To state this formally, we first introduce some necessary notation. Throughout the paper, we use the standard Landau notation for asymptotic orders, with all asymptotics considered as $n\rightarrow \infty$.  For example, whenever we write $x=o(1)$ and $y=\omega(1)$, we mean that $x$ tends to $0$ and $y$ tends to $\infty$ as $n \rightarrow \infty$. We say that an event $\mathcal E$ holds with high probability (\emph{whp} for short) if $\mathcal E$ holds with probability tending to $1$ as $n \rightarrow \infty$. We use $\log$ to denote the natural logarithm unless the base is explicitly stated.  
\begin{restatable}{THM}{fragilemain}
\label{thm:fragile-main}
Assume that $p = p(n) \in (0,1]$ and $\Delta := \Delta(n)\in [1,\infty)$ 
such that  
$n^2 p = \omega(1)$, and $\Delta \leq n^2 p/48000$.
Let $H:=H_n$ be an $n$-vertex connected graph with maximum degree at most $\Delta$, and let $R:=H \cup G(n,p)$. Then whp the following hold:
\begin{enumerate}[label=\rm (\alph*)]
\item$\tw(R)    =   \Omega \left( \tw(H) + \min \left (\frac{n^2 p}{\Delta},n \right ) \right)$;
\item$g(R)     =    \Omega \left(g(H) +  \min\left( \left (\frac{n^2 p}{\Delta} \right )^2,n^2 p\right)\right)$;
\item$h(R) =   \Omega\left( h(H)+\min\Bigl( \sqrt{\frac{n^2p}{\log{\Delta}}}, \frac{n^2p}{\Delta \sqrt{\log \Delta}}  \Bigr) \right)$.
\end{enumerate}
\end{restatable}
We remark that Dowden, Kang, and Krivelevich~\cite{dowden2019} proved a weaker version of Theorem~\ref{thm:fragile-main} (b) for a base graph $H$ with \emph{bounded} maximum degree.
We also remark that Aigner-Horev, Hefetz, and Krivelevich~\cite{AHK2025} proved that there exists an absolute constant $c > 0$ such that for any integer $1 \leq k = o(n)$, if $H$ is an $n$-vertex graph with independence number $\alpha(G) \leq cn/k$, then whp $H \cup G\left (n,\frac{8}{nk} \right )$ has Hadwiger number $\Omega\left (\frac{n}{\sqrt{\log n} \cdot \max \{\sqrt{\alpha(G)}, \log n \} \cdot k } \right )$, which is best possible up to a polylogarithmic factor in $n$ when $k=O(1)$.

Theorem~\ref{thm:fragile-main}(a) has interesting implications for long cycles and large forest minors. 
\begin{restatable}{COR}{fragilecor}\label{cor:largeforestminor}
Let $p$, $\Delta$, $H$, and $R$ be as in Theorem~\ref{thm:fragile-main}. 
Then whp $R$ contains 
\begin{enumerate}[label=\rm (\alph*)]
\item a cycle of length $\Omega\left (n^2 p/\Delta \right )$;
\item all forests on $O\left (n^2 p/\Delta \right )$ vertices as minors.
\end{enumerate}
\end{restatable}
    
As we will see later in Examples~\ref{example1}--\ref{example4}, the results in Theorem~\ref{thm:fragile-main} are best possible. But all base graphs used in those examples are connected and contain a large number of leaves. A natural question is what happens if we do not impose such properties or constraints on a base graph. 
Indeed, if a base graph has small path cover number, then we can derive tight bounds. The \emph{path cover number} of a graph~$G$, introduced by Ore~\cite{Ore1961}, is the minimum number of vertex-disjoint paths covering all vertices of $G$.
\begin{restatable}{THM}{spanningforest}\label{thm:spanningforest}
Assume that $p=p(n) \in [0,1]$ satisfies $n^2 p =\omega(1)$. Let $H:=H_n$ be a (\emph{not necessarily connected}) $n$-vertex graph 
whose path cover number is at most $n^2p/20$.
Let $G:=G(n,p)$ and $R := H \cup G$. 
Then whp the following hold:
\begin{enumerate}[label=\rm (\alph*)]
\item $\tw(R)  =  \Theta\left(\tw(H)+\min\left(n^2p, n\right)\right)$;
\item $g(R)  =  \Theta\left(g(H)+n^2p\right)$;
\item $h(R)   =  \begin{cases}
\Omega\left(h(H)+\sqrt{n^2 p}\right)& \text{if }np < \frac{11}{10}, \\
\Omega\left(h(H)+h(G)\right)&\text{otherwise.}
\end{cases}$
\end{enumerate}
\end{restatable}
We remark that 
Theorem~\ref{thm:spanningforest} would fail badly if the path cover number of $H$ is $\omega\left (n^2 p \right )$; for any arbitrarily slowly increasing function $c(n)$ with $c(n) = \omega(1)$, there exists an $n$-vertex graph $H_n$ with the path cover number $\omega\left (n^2 p \right )$ such that whp both $\tw(H_n \cup G(n,p))$ and $h(H_n \cup G(n,p))$ are at most $c(n)$. See Observation~\ref{obs:spanningforest_fail}.

\smallskip
Returning to Theorem~\ref{thm:fragile-main}, it is indeed a consequence of a more general result on fragile minor-monotone graph parameters (see Lemma~\ref{lem:main-general}). But we defer its exact statement to Section~\ref{sec:fragile-general} to avoid technical notations at this stage. An essential ingredient of the proof of Lemma~\ref{lem:main-general} is the following key lemma, which is interesting on its own and may prove useful for many other applications.

\begin{restatable}[Key lemma]{LEM}{keylemma}
\label{lem:main}
Let $C \geq 8$ be an absolute constant. Let $p = p(n) \in (0,1]$ and $\Delta = \Delta(n) \in [1,\infty)$ be the parameters satisfying $p\leq 2/n$, $n^2 p = \omega(1)$, and $\Delta \leq \frac{n^2p}{4800C}$.
Let $H:=H_n$ be an $n$-vertex connected graph with maximum degree at most~$\Delta$ and let $R:=H \cup G(n,p)$. Then whp $R$ contains vertex-disjoint \emph{connected} subgraphs $R_1 , \dots , R_m$ such that
\begin{enumerate}[label=\rm (\alph*)]
\item $96  \frac{C \Delta}{np} \leq |V(R_i)| \leq 192  \frac{C \Delta}{np}$ for each $i\in [m]$;
\item $m \geq \frac{n^2 p}{9600 C \Delta}$.
\end{enumerate}
\end{restatable}

Let us illustrate how Lemma~\ref{lem:main} can be used; this is the main idea behind Lemma~\ref{lem:main-general}, the boosting lemma. Assume that the conditions of Lemma~\ref{lem:main} hold; in particular, let $H$ be a base graph with maximum degree $\Delta$ and let $R:=H \cup G(n,p)$. Given an arbitrary minor-monotone graph parameter $f(\cdot)$, assume that we want to bound $f(R)$ from below. We begin with a standard two-round exposure approach: Take two independent random graphs $G(n,p_1)$ and $G(n,p_2)$ with $p_1 = p_2 = 1 - \sqrt{1 - p} \geq p/2$ such that $G(n,p) = G(n,p_1) \cup G(n,p_2)$. 
Applying Lemma~\ref{lem:main} we show that whp $H \cup G(n,p_1)$  contains `\emph{many}' vertex-disjoint  connected subgraphs $R_1, \dots , R_{m'}$ of comparable sizes $\Theta \left (\frac{\Delta}{np} \right )$. We contract each of $R_1, \dots , R_{m'}$ into a single vertex and delete all the other vertices; thereby we obtain a large minor $S$ of~$R$ and hence $f(R)   \geq   f(S)$. 
Then for some $m = \Theta(m') = \Theta \left (\frac{n^2 p}{\Delta} \right )$ and $q = 1 - \exp\left(-\Theta \left( \frac{\Delta^2}{n^2 p} \right) \right)$,
we show that $S$ contains a random graph $G(m,q)$ as a subgraph, due to the edges of $G(n,p_2)$ going between $R_i$ and $R_j$ for $1\le i\neq j\le m'$. Thus, it follows that 
\[ 
    f(R) \geq f(G(m,q)).
\]
What is important and interesting about this procedure is that the edge density can be `amplified' so that   $G(m,q)$ is in a \emph{supercritical} regime, e.g., with $mq>1$,  even if we would begin with a very sparse base graph $H$ (such as a path) and a \emph{subcritical} random graph $G(n,p)$, e.g., with $np<1$. 

In the proof of Theorem~\ref{thm:fragile-main}, we apply Lemma~\ref{lem:main-general} with certain choices of constants and parameters so that  $G(m,q)$ is in a supercritical regime. Then we use well-known facts that $\tw(\cdot)$, 
$g(\cdot)$, and $h(\cdot)$ are  `\emph{large}'  for $G(m,q)$ in a supercritical regime. 

The rest of the paper is organized as follows. In Section~\ref{sec:prem}, we provide definitions and facts about minor-monotone graph parameters. In Section~\ref{sec:fragile-general}, we study the boosting lemma and prove   Theorem~\ref{thm:fragile-main} by using the boosting lemma while assuming Lemma~\ref{lem:main}.
In Section~\ref{sec:spanningtrees}, we prove Theorem~\ref{thm:spanningforest}.
In Section~\ref{sec:vertex-disjoint-subgraphs}, we prove several lemmas on finding many vertex-disjoint connected subgraphs. Using these lemmas, we prove Lemma~\ref{lem:main} in Section~\ref{sec:key-connectinglem}.
In Section~\ref{sec:sharpness}, we discuss the sharpness of the results. 
Finally, in Section~\ref{sec:discussions} we discuss Lemma~\ref{lem:main} and Theorem~\ref{thm:fragile-main}(b).

\section{Preliminaries}\label{sec:prem}
Throughout this paper, every graph is simple and undirected, meaning that graphs do not have loops or parallel edges. Let $\mathbb{N}$ be the set of positive integers. For $N\in \mathbb{N}$, let $[N]:=\{1,2,\ldots,N\}$ be the set of positive integers less than or equal to~$N$.

\subsection{Definitions of tree-width, genus, and Hadwiger number}\label{sec:def-minor-monotone} 
A graph $G'$ is a \emph{minor} of a graph $G$ if a graph isomorphic to $G'$ can be obtained from $G$ by deleting vertices or edges and contracting edges. Equivalently, a graph $G'$ is a minor of a graph $G$ if there exists a collection of pairwise vertex-disjoint connected subgraphs $\{ G_u : u \in V(G') \}$ of $G$ such that $G_{u}$ and~$G_{v}$ are joined by an edge of~$G$ for each $uv \in E(G')$.

A graph parameter $f(\cdot)$ is \emph{minor-monotone} if $f(G') \leq f(G)$ 
whenever a graph $G'$ is a minor of a graph $G$.  
We will review four minor-monotone graph parameters.

Tree-width, rediscovered by Robertson and Seymour~\cite{rs1984}, is one of the most well-known and well-studied graph parameters 
in algorithmic and structural graph theory.
A \emph{tree decomposition} of a graph $G$ is a pair $(T, ({B_v})_{v \in V(T)})$ 
of a tree $T$ and 
a collection $(B_v)_{v\in V(T)}$ of subsets $B_v\subseteq V(G)$ such that the following conditions hold:
\begin{itemize}
\item $\bigcup_{v \in V(T)} B_v=V(G)$; 
\item For every edge $e = xy \in E(G)$, there exists a vertex $v$ of $T$ such that $x, y \in B_v$;
\item For every vertex $x \in V(G)$, a subset $\{ v \in V(T):x \in B_v \}$ induces a subtree of $T$.
\end{itemize}
The \emph{width} of a tree decomposition $(T, ({B_v})_{v \in V(T)})$ is $\max_{v \in V(T)} (|B_v| - 1)$. 
The \emph{tree-width} $\tw(G)$ of a graph $G$ is defined as the minimum width over all tree decompositions of~$G$.

The study of graph minors is motivated by the fact that 
for each surface $\Sigma$, 
the set of graphs that can be embedded in $\Sigma$ without crossings
is minor-closed.
The (orientable) \emph{genus} $g(G)$ of a graph $G$
is defined as the minimum number of handles to be added to the sphere 
such that $G$ can be embedded without any crossings.

We are also interested in the size of the largest complete minor in a graph.
The \emph{Hadwiger number} $h(G)$ of a graph $G$ is defined as the maximum integer $\ell \geq 0$ such that the complete graph $K_{\ell}$ is a minor of $G$.

Note that adding a new vertex or a new edge to a graph increases its tree-width and Hadwiger number by at most one. 
Adding a new edge to a graph increases its genus by at most one. 
We have $h(G) \leq \tw(G) + 1$, because $\tw(K_t) = t-1$.  

The determination of $g(K_t)$ has a long history. Heawood~\cite{Heawood1890} showed that $g(K_t) \geq \lceil (t-3)(t-4)/12 \rceil$ and conjectured that the equality holds. 
Over several decades, numerous researchers attempted to prove the conjecture and solved special cases; finally, Ringel and Youngs~\cite{ringel1968} finished the remaining cases (see~\cite{ringel1968} for the history of the conjecture).
It is shown that $g(K_t) = \left\lceil {(t-3)(t-4)}/{12}\right\rceil$ for any integer $t \geq 3$.  Because the genus is minor-monotone, it follows that 
\[
    g(G)\ge \left\lceil (h(G)-3)(h(G)-4)/12\right\rceil=\Omega\left(h(G)^2\right).
\]
In fact, various graph parameters are closely related; see an excellent survey by Harvey and Wood~\cite{HarveyWood}.

\subsection{Lower bounds for minor-closed parameters in $G(n,p)$}
We collect some known results about 
lower bounds on tree-width, 
genus, and Hadwiger number of $G(n,p)$ from~\cite{dowden2019,fountoulakis2008,lee2012,perarnau2014}, which are restated so that we can use them directly in the proof of Theorem~\ref{thm:fragile-main}.

The following theorem shows that the tree-width of $G(n,p)$ grows linearly when $p=c/n$ for a fixed $c>1$. This was initially a conjecture of Gao~\cite{Gao2012} and was proved by Lee, Lee, and Oum~\cite{lee2012} in a stronger form in terms of rank-width.
Later, Perarnau and Serra~\cite{perarnau2014} presented a direct proof.
Since both tree-width and genus do not increase by taking subgraphs, we may replace the condition $p=c/n$ with $p\geq c/n$.
\begin{THM}[Lee, Lee, and Oum~\cite{lee2012}]\label{thm:randomtw}
For any $c > 1$ and $p \geq c/n$, there exists $r=r(c) > 0$ such that whp 
$\tw(G(n,p)) \geq r\cdot n$.
\end{THM}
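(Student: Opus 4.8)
The plan is to produce, inside $G(n,p)$, a minor on $\Omega(n)$ vertices whose treewidth is provably linear in its order, and then invoke the minor-monotonicity of treewidth. Since treewidth cannot decrease when edges are added, the monotone coupling $G(n,c/n)\subseteq G(n,p)$ for $p\ge c/n$ reduces the statement to the case $p=c/n$; so fix a constant $c>1$ and work with $G(n,c/n)$. The minor I would use is the \emph{kernel} of the giant component: let $\mathcal C$ be the $2$-core of $G(n,c/n)$ and let $\mathcal K$ be obtained from $\mathcal C$ by repeatedly suppressing vertices of degree exactly $2$ (i.e.\ contracting an incident edge). By classical results on sparse random graphs, for fixed $c>1$ we have whp that $\mathcal C$, and hence $\mathcal K$, has $N=\Theta(n)$ vertices and $\Theta(n)$ edges (the implied constants depend on $c$ and degrade as $c\downarrow 1$), and by construction $\mathcal K$ has minimum degree at least $3$; it is a multigraph in general, but loops and parallel edges do not affect treewidth. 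As $\mathcal C$ is a subgraph of $G(n,c/n)$ and $\mathcal K$ is obtained from $\mathcal C$ by contracting edges, $\mathcal K$ is a minor of $G(n,c/n)$, so it suffices to prove $\tw(\mathcal K)=\Omega(N)$.

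To lower-bound $\tw(\mathcal K)$ I would use the standard separator characterization: if $\tw(\mathcal K)<k$ then $\mathcal K$ has a set $S$ with $\abs{S}\le k$ such that every component of $\mathcal K-S$ has at most $N/2$ vertices. Hence it is enough to show that whp there is $\delta=\delta(c)>0$ such that no set $S$ with $\abs{S}\le \delta N$ leaves all components of $\mathcal K-S$ of size at most $N/2$. If this failed, then two-colouring the components of $\mathcal K-S$ (each of size at most $N/2$) into roughly balanced classes yields a partition $V(\mathcal K)=A\cup S\cup B$ with $\abs{A},\abs{B}\ge N/5$, $\abs{S}\le\delta N$ and no $A$--$B$ edge, i.e.\ $e_{\mathcal K}(A,B)=0$.

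The remaining point is that such a configuration is unlikely. Conditioned on its degree sequence, $\mathcal K$ is distributed essentially as the configuration-model random multigraph with that degree sequence — a multigraph in which every degree is at least $3$. One then union-bounds, over all partitions $V(\mathcal K)=A\cup S\cup B$ as above, the probability that none of the $\Theta(N)$ random pairings of half-edges crosses the $A$--$B$ cut, which is $e^{-\Theta(N)}$ for each fixed such partition. This is the main obstacle: minimum degree exactly $3$ makes the estimate tight, so a crude $3^{N}$ bound on the number of partitions does not beat the $e^{-\Theta(N)}$ gain, and one must instead sum over the parameters $\abs{A}=a$ and $\abs{\partial A}=t$ and bound the relevant binomial coefficients carefully — this is precisely the (delicate but by now standard) argument showing that a random cubic multigraph is an expander. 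Granting it, $\tw(G(n,p))\ge\tw(G(n,c/n))\ge\tw(\mathcal K)\ge\delta N=\Omega(n)$ whp, which is the claim with $r=r(c)$ the implied constant.

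A cleaner variant of the last step, should one wish to avoid configuration-model bookkeeping, is to phrase the same computation directly inside the $2$-core of $G(n,c/n)$: one shows that whp there is no set of $\Omega(n)$ edges whose removal disconnects the $2$-core into pieces that, after discarding degree-$2$ chains, are all small; the bottleneck is again the vertices of branching degree $3$, so the estimate is of the same strength. Either way, the substance of the proof is this expansion-type first-moment bound for the branching part of the giant component, while the reductions around it (monotonicity in $p$, passing to the $2$-core and its kernel, and the separator-to-treewidth translation) are routine.
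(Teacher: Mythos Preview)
The paper does not prove this theorem; it is simply quoted as a known result of Lee, Lee, and Oum~\cite{lee2012} and used as a black box (to show that $\tw$ is $(c,r)$-bounded from below by $\widetilde t(m,q)=m$ in the proof of Corollary~\ref{cor:tw}). There is therefore no ``paper's own proof'' to compare against.

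As for the viability of your sketch: the overall strategy---reduce to $p=c/n$ by the monotone coupling, pass to the kernel of the giant as a minor with $\Theta(n)$ vertices and minimum degree $\ge 3$, and then argue linear treewidth via expansion of the configuration model---is a legitimate route and is close in spirit to how results of this type are proved. Two points deserve care. First, the assertion that the kernel, conditioned on its degree sequence, is ``essentially'' a configuration-model multigraph is a nontrivial contiguity statement; you should cite it rather than assert it. Second, you yourself flag that the union bound over partitions is tight at minimum degree $3$ and that a crude $3^N$ count does not suffice; this is exactly where the work lies, and as written your argument \emph{grants} the conclusion rather than proving it. So what you have is a correct plan with the decisive estimate outsourced, not a proof. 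If you want a reference rather than redoing the computation, the original paper~\cite{lee2012} (which in fact proves the stronger statement for rank-width, hence treewidth) is the right citation.
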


\begin{THM}[Dowden, Kang, and Krivelevich~\cite{dowden2019}]\label{thm:randomgenus}
For any $c > 1$ and $p \geq c/n$, there exists $r=r(c) > 0$ such that whp $g(G(n,p)) \geq r\cdot  n^2 p$.
\end{THM}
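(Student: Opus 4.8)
The plan is to combine minor-monotonicity of the genus with the elementary Euler-formula bound: every simple connected graph with $v$ vertices and $e$ edges has genus at least $(e-3v+6)/6$, since an embedding in $S_g$ satisfies $2-2g=v-e+f\le v-e/3$ (each face uses at least three edge-sides). As the genus can only increase when edges are added, it suffices to treat two regimes of $p$. \textbf{Dense regime: } $p\ge K/n$ for a suitably large constant $K=K(c)$. Here $G(n,p)$ whp has a component $C$ such that the components other than $C$ together have fewer than $e^{-K/2}n$ vertices and (whp) bounded total excess, hence at most $e^{-K/2}n+O(1)<\tfrac16 n^2p$ edges; combined with a Chernoff bound on $e(G(n,p))$ this gives $e(C)\ge\tfrac13 n^2p$ whp. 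Since $C$ has at most $n\le\tfrac1K n^2p$ vertices, the Euler bound yields $g(G(n,p))\ge g(C)\ge r_2\, n^2p$ for a constant $r_2>0$. \textbf{Sparse regime: } $c/n\le p\le K/n$. Then $n^2p\le Kn$, so it is enough to prove $g(G(n,c/n))\ge r_1 n$ for a constant $r_1=r_1(c)>0$; the case $p>c/n$ follows by monotonicity, and the theorem then holds with $r=\min(r_1/K,\,r_2)$.

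To analyze $G(n,c/n)$ I would use a two-round exposure. Put $\varepsilon=(c-1)/2>0$ and write $G(n,c/n)=G_1\cup G_2$ with $G_1=G(n,p_1)$ and $G_2=G(n,p_2)$ independent, $p_1=(1+\varepsilon)/n$, and $p_2$ determined by $(1-p_1)(1-p_2)=1-c/n$, so that $p_2=(1+o(1))\gamma/n$ with $\gamma=(c-1)/2>0$. Since $p_1n>1$ is a fixed constant, whp $G_1$ has a giant component $C_1$ with $\abs{V(C_1)}\ge\rho_1 n$ for some $\rho_1=\rho_1(\varepsilon)>0$. Fix a spanning tree $T$ of $C_1$. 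By a standard tree-chopping argument, $V(C_1)$ can be partitioned into parts each inducing a subtree of $T$, all but at most one of size between $\sqrt n$ and $2\sqrt n$; discarding the exceptional part leaves $k\ge\rho_1\sqrt n/3$ pairwise disjoint connected subgraphs $R_1,\dots,R_k$ of $G_1\subseteq G(n,c/n)$ with $\sqrt n\le\abs{V(R_i)}\le 2\sqrt n$.

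Now contract each $R_i$ to a single vertex and delete everything else; this produces a minor $R'$ of $G(n,c/n)$ on $k$ branch vertices. Conditioning on $G_1$ (hence on the family $R_1,\dots,R_k$), the branch vertices for $R_i$ and $R_j$ become adjacent whenever $G_2$ has an edge between $V(R_i)$ and $V(R_j)$; since the sets $V(R_i)\times V(R_j)$, taken over distinct unordered pairs $\{i,j\}$, are pairwise disjoint sets of potential $G_2$-edges, these $\binom k2$ events are mutually independent, each of probability at least $1-(1-p_2)^{n}\ge 1-e^{-\gamma/2}=:\eta>0$ (using $\abs{V(R_i)}\abs{V(R_j)}\ge n$). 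Thus, conditionally, $R'$ contains a copy of $G(k,\eta)$ with $\eta$ a positive constant and $k=\omega(1)$, so whp $R'$ is connected and $e(R')\ge\tfrac\eta2\binom k2$ by a Chernoff bound. The Euler bound then gives $g(G(n,c/n))\ge g(R')\ge\tfrac16\big(\tfrac\eta2\binom k2-3k\big)\ge r_1 n$, because $\binom k2=\Theta(n)$ dominates $3k=\Theta(\sqrt n)$. Together with the dense regime, this completes the proof.

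The one genuinely delicate point is the interaction between the subgraphs to be contracted and the edges that must run between them: the $R_i$ are required to be connected, so they have to be read off from $G$ itself (via a spanning tree of the giant of $G_1$), yet the density of the resulting minor is governed by random edges joining them — naively these cross-edges are correlated with the choice of the $R_i$. The two-round exposure is precisely what removes this dependence: $G_2$ is independent of $G_1$, so after revealing $G_1$ the cross-edges are fresh randomness, and because the $R_i$ are vertex-disjoint the $\binom k2$ ``is there a cross-edge'' indicators are exactly independent. The only real design choice is the scale of the parts: size $\Theta(\sqrt n)$ is the sweet spot — large enough that each of the $\binom k2$ pairs is joined with constant probability, and small enough that $k=\Theta(\sqrt n)$ parts fit inside the linear-sized giant — so the contracted minor is, up to constants, a dense random graph on $\Theta(\sqrt n)$ vertices, of genus $\Theta(n)=\Theta(n^2p)$. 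Everything else (the size of the giant component, the Chernoff estimates, and the tree-chopping lemma) is routine.
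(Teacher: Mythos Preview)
This theorem is quoted from \cite{dowden2019}; the present paper does not supply its own proof, so there is no in-paper argument to compare against directly. Your overall strategy is natural and largely correct: the Euler-formula bound handles the dense regime cleanly, and for the sparse regime the two-round exposure followed by contraction of connected pieces is exactly the mechanism this paper itself exploits (in Lemma~\ref{lem:main}), and is the right way to decouple the choice of the pieces from the cross-edges.

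There is, however, a real gap in the sparse-regime step. The ``standard tree-chopping argument'' you invoke --- partitioning $V(C_1)$ into subtrees of a fixed spanning tree $T$, all but one of size between $\sqrt n$ and $2\sqrt n$ --- is false for general trees. Consider a spider: a centre $v$ with many legs, each of length $\sqrt n-1$. Every subtree of size at least $\sqrt n$ must contain $v$, so in any partition into subtrees at most one part can reach size $\sqrt n$; all the others are leg-segments of size $<\sqrt n$. The correct general statement (Lemma~\ref{prop:frag} here) only gives pieces of size between $\ell$ and $\ell\,\Delta(T)$. For a spanning tree of the giant of $G(n,(1+\varepsilon)/n)$ one has $\Delta(T)=\Theta(\log n/\log\log n)$, and this yields merely $k=\Omega(\sqrt n/\log n)$ pieces, hence genus $\Omega(n/\log^2 n)$ after contraction --- not the $\Omega(n)$ you need. (This size-variation is precisely why the paper's own Lemma~\ref{lem:connect} resorts to a dyadic decomposition of the clusters.)

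The repair is easy and preserves your argument: chop a long \emph{path} rather than a tree. It is classical (Ajtai--Koml\'os--Szemer\'edi; see also Proposition~\ref{prop:longpath} here for the special case $c=20$) that for every fixed $c>1$, whp $G(n,c/n)$ contains a path on $\Omega_c(n)$ vertices. Cutting such a path inside $G_1$ into consecutive segments of length $\lceil\sqrt n\,\rceil$ produces $\Theta(\sqrt n)$ vertex-disjoint connected pieces of uniform size, and your second-round computation then goes through verbatim to give genus $\Omega(n)$.
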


\begin{THM}[Fountoulakis, K\"uhn, and Osthus~\cite{fountoulakis2008}]\label{thm:fko} ~
\begin{enumerate}[label=\rm (\alph*)]
\item 
There exists $r>0$ such that for any $p \geq \frac{5}{4n}$, whp $h(G(n,p))\geq r  \sqrt{n}$. 
\item There exists $C' > 0$ such that for any $C'/n \leq p \leq 1/2$, whp
$h(G(n,p)) \geq \frac{n}{2 \sqrt{\log_{1/(1-p)} (np)}} $.
\item 
For any $\varepsilon > 0$ and  $\frac{1+\varepsilon}{n}\leq p \leq 1-\varepsilon$,  whp
$h(G(n,p)) = \Theta \left(n /\sqrt{\log_{1/(1-p)} (np)}\right)$.
\end{enumerate}
\end{THM}

There are many other interesting results about these minor-monotone graph parameters of $G(n,p)$; for an overview, see Table~\ref{fig:table1} in the appendix of this paper. 
\section{Perturbing a graph of small maximum degree}\label{sec:fragile-general}

In this section, we aim to prove Theorem~\ref{thm:fragile-main} assuming our key lemma, Lemma~\ref{lem:main}.
Inspired by Theorems~\ref{thm:randomtw}--\ref{thm:fko}, we focus on a lower bound of the form $f(G(n,p))\ge r \cdot \mathbb{E}(f(G(n,p)))$, leading to the following definition. 

\begin{DEFN}\label{def:f-bounded}
Let $f$ be a graph parameter, and let $\widetilde{f} : \mathbb{N} \times [0,1] \to \mathbb{R}_{\geq 0}$ be a function. 
Let $c, r >0$.
We say that $f$ is \emph{$(c,r)$-bounded from below} by $\widetilde{f}$ if for any $p:\mathbb N\to \mathbb R$ with $p(n)\in [c/n,1]$ for all large $n\in\mathbb N$,  whp
\[
f\left(G(n, p(n))\right)  \geq  r \cdot \widetilde{f}(n,p(n)).
\]
\end{DEFN}

The following \emph{boosting lemma} tells us how much a minor-monotone graph parameter can increase when we add a few edges randomly to a \emph{connected} base graph of small maximum degree. 
It will be applicable for various minor-monotone graph parameters.
Its name, the boosting lemma, comes from typical situations in which
the value of $f(G(n,p))$ is whp larger than the value of $r\cdot \mathbb{E}(f(G(m,q)))$
with $q \gg p$.
The proof uses Lemma~\ref{lem:main} and a standard two-round exposure argument. 
\begin{LEM}[Boosting lemma]
\label{lem:main-general}
Let $f$ be a minor-monotone graph parameter such that $f$ is $(c,r)$-bounded from below by~$\widetilde{f}$ for some $c>1$ and $r \geq 0$. Let $C \geq 4c$ and assume that $p = p(n) \in (0,1]$ and $\Delta = \Delta(n)\in [1,\infty)$ satisfy $p\leq 2/n$ and $n^2 p = \omega(1)$, and $\Delta \leq \frac{n^2 p}{9600C}$. Let    
\[ 
    m= m(n) := \left\lceil \frac{n^2 p}{19200C\Delta }\right\rceil , ~~
    M = M(n) := \frac{(96 C \Delta)^2}{n^{2} p},~~
    \text{and}~~ q=q(n) := 1 - e^{-M}.
\]
Let $H$ be an $n$-vertex connected graph with maximum degree at most $\Delta$, and let $R:=H\cup G(n,p)$. Then whp 
\[f(R)  \geq    
\begin{cases}
r \cdot \widetilde{f} (m,q) & \text{if }\Delta \leq \sqrt{n^2 p \log \left (n^2 p \right ),} \\
f (K_{m}  ) & \text{otherwise.}
\end{cases}
\]
\end{LEM}

\begin{proof}[Proof of Lemma~\ref{lem:main-general}, assuming Lemma~\ref{lem:main}]  
First we take two independent random graphs $G(n,p_1)$ and $G(n,p_2)$  where $1 - p = (1 - p_1)(1 - p_2)$ with $p_1 = p_2 := 1- \sqrt{1-p} \geq p/2$. Then $G(n,p)$ has  the same probability distribution as $G(n,p_1) \cup G(n,p_2)$. 
Let $R_0 := H \cup G(n,p_1)$. 
Then $R$ has the same probability distribution as $R_0 \cup G(n,p_2)$, so with slight abuse of notation we write  $R=R_0 \cup G(n,p_2)$.

By Lemma~\ref{lem:main}, whp $R_0$ contains \emph{vertex-disjoint connected} subgraphs $R_1 , \dots , R_{m'}$ such that
\begin{enumerate}[label=\rm (\Alph*)]
\item $96 C \Delta (np_1)^{-1} \leq |V(R_i)| \leq 192 C \Delta (np_1)^{-1}$ for each $i\in [{m'}]$;  
\item ${m'} \geq {n^2 p_1}/ (9600 C\Delta)$.
\end{enumerate}
Next, consider a graph $S$ such that $V(S) = [{m'}]$ and for $1 \leq i < j \leq m'$, $ij \in E(S)$ if and only if there exists an edge of $G(n,p_2)$ 
having one end in $R_i$ and another end in $R_j$.
Observe that $S$ is isomorphic to a minor of $R$, since if we contract $R_i$ into a single vertex $s_i$ for each $i\in [{m'}]$, the resulting minor of $R$ has a subgraph isomorphic to $S$.
Furthermore, for each $1\le i<j \le m'$, the probability that an edge of $G(n,p_2)$ exists between $R_i$ and $R_j$ is
\begin{align*}
1 - (1 - p_2)^{|V(R_i)||V(R_j)|}  
&  \geq
1- (1 - p_2)^ { \left(\frac{96C\Delta}{np_1} \right)^2}, \\
\intertext{and by the inequalities $1+x\le e^{x}$ and $p_1=1-\sqrt{1-p}\le p$, we have }
& \geq
1 - \exp\left(-  p_2 \frac{(96 C \Delta)^2}{n^2 p_1^2}\right)
& \ge 1 - \exp\left(-  \frac{(96 C \Delta)^2}{n^2 p}\right)  
=1- e^{-M} = q.
\end{align*}
Since $p_1\ge p/2$, we observe that 
\[ 
    {m'}  \geq \left\lceil \frac{n^2 p}{19200 C \Delta}\right\rceil  = m.
\]  
Hence, $S$ contains a random graph $G(m, q)$ as a subgraph.  
Since $f$ is minor-monotone, whp 
\begin{align}
f(R) \geq f(S) \geq f(G(m, q)). \label{eq:f-minor-subgraph}
\end{align}

If $\Delta \leq \sqrt{n^2 p \log \left (n^2 p \right )}$, then 
\[ 
    m\ge  \frac{n^2 p}{19200 C \Delta} =\omega(1),
    \qquad 
    q=1 - \exp\left({-\frac{(96 C \Delta)^2}{n^2 p} }\right).
\] 
If $\frac{(96C\Delta)^2}{n^2p}\ge 1$, then $q\ge 1-e^{-1}$ and therefore $mq=\omega(1)>c$ for all large $n$.
If $\frac{(96C\Delta)^2}{n^2p}< 1$, then as $1-e^{-x} \geq (1-e^{-1})x$ for all $0<x<1$, we have 
\[ mq> m(1-e^{-1})\frac{(96 C \Delta)^2}{n^2 p}\ge (1-e^{-1})\frac{96^2 C\Delta}{19200}\ge (1-e^{-1})\frac{96^2 C}{19200}>c.\] 
So in both cases, $q>c/m$ for all large $n$.
Since $f$ is $(c,r)$-bounded below by $\widetilde{f}$ and $q>c/m$, by Definition~\ref{def:f-bounded} whp  
\[f(G(m, q))  \geq r  \cdot \widetilde{f}  (m, q).\]

On the other hand, if $\Delta \geq \sqrt{n^2 p \log \left (n^2 p \right )}$, then $1-q  \leq \left(n^2 p\right)^{-(96C)^2} $ for all large $n$. Since $m^2(1-q) \leq \left (n^2 p \right )^{2-(96C)^2}   = o(1)$, whp the random graph $G(m, q)$ is isomorphic to~$K_m$
and by \eqref{eq:f-minor-subgraph}, whp 
\[ 
    f(G(m,q)) \geq   f(K_m).
\] 

These two inequalities with  \eqref{eq:f-minor-subgraph} imply the desired conclusion.
\end{proof}

Now it is straightforward to obtain Theorem~\ref{thm:fragile-main} from the boosting lemma.

\fragilemain*
\begin{proof}
    (a)
    Since $\tw(R)\ge \tw(H)$, it is enough to prove that $\tw(R)\ge \Omega\left(n^2p/\Delta\right)$.
    Let $\widetilde{t}(x,y) := x$.
    By Theorem~\ref{thm:randomtw}, there exists $r>0$ such that   $\tw$ is $(5/4,r)$-bounded from below by $\widetilde{t}$.

    If $p>2/n$, then whp $\tw(R)\ge \tw(G(n,p))\ge r n$. 
    Thus, we may assume that $p\le 2/n$.
    By  Lemma~\ref{lem:main-general} with $c = 5/4$,  $C = 5$, and 
    $m := \left \lceil \frac{n^2 p}{19200C\Delta}\right \rceil  = \Omega\left(\frac{n^2 p}{\Delta}\right)$,  whp  
\[\tw(R)  \geq  
\begin{cases}
r \cdot \widetilde{t} (m,q) = r \cdot m = \Omega\left(\frac{n^2 p}{ \Delta}\right)& \text{if }\Delta \leq \sqrt{n^2 p \log \left (n^2 p \right )}, \\
\tw (K_{m}  ) = m-1 =\Omega\left(\frac{n^2 p}{\Delta}\right) & \text{otherwise.}
\end{cases}
\]

\bigskip \noindent 
(b) 
Since $g(R)\ge g(H)$, it suffices to show that $g(R)\ge \Omega\left(\min\left(n^2p,\left(n^2p/\Delta\right)^2\right)\right)$.

If $p \geq 2/n$, then by Theorem~\ref{thm:randomgenus}, there exists $r>0$ such that $g(G(n,p)) \geq r\cdot n^2 p$, and thus $g(R) =\Omega\left(n^2 p\right)$.

Now we assume that $p \leq 2/n$. 
Let $\widetilde{g}(x,y) := x^2 y$.
By Theorem~\ref{thm:randomgenus}, there exists $r>0$ such that the genus $g$ is $(5/4,r)$-bounded from below by $\widetilde{g}$. Applying Lemma~\ref{lem:main-general} with  $c = 5/4$,  $C = 5$, $m := \left \lceil \frac{n^2 p}{19200C\Delta} \right \rceil  = \Omega \left (\frac{n^2 p}{\Delta} \right )$,
$M=(96C\Delta)^2/\left (n^2p \right )$,
and $q:=1-e^{-M}$, we deduce that whp
\[g(R)   \geq  
\begin{cases}
r \cdot \widetilde{g} (m,q) = r \cdot m^2 q =\Omega\left(\left(\frac{n^2p}{\Delta}\right)^2 q\right) & \text{if }\Delta \leq \sqrt{n^2 p \log \left (n^2 p \right )}, \\
g(K_{m})  = \Omega\left (m^2 \right ) = \Omega\left(\left(\frac{n^2 p}{\Delta}\right)^2\right)  & \text{otherwise.}
\end{cases}
\]

Furthermore, observing $q\ge (1-e^{-1}) \min\left(1,  \frac{(96C\Delta)^2}{n^2p} \right)$, we complete the proof.

\bigskip \noindent 
  
(c) 
Since $h(R)\ge h(H)$, it suffices to show that $h(R)=\Omega\left(\min\Bigl( \sqrt{\frac{n^2p}{\log{\Delta}}}, \frac{n^2p}{\Delta \sqrt{\log \Delta}}  \Bigr)\right)$.
We choose  $r$, $C'$ so that Theorem~\ref{thm:fko}(a)--(b) hold and define 
\begin{equation}
\widetilde{h}(n,p) :=
\begin{cases}
r \sqrt{n}& \text{if } \frac{5}{4n} \leq p < \frac{C'}{n}, \\
\frac{n}{2\sqrt{\log_{1/(1-p)} (np)}} & \text{if } \frac{C'}{n} \leq p \leq \frac{1}{2},\\
\frac{n}{2\sqrt{\log_{2} n}} &\text{if } \frac{1}{2} < p \leq 1.
\end{cases} \label{eqn:widetilde h}
\end{equation}

Then $h(G(n,p)) \geq \widetilde{h}(n,p)$ for $\frac{5}{4n} \leq p \leq 1/2$. For $p \geq 1/2$, by Theorem~\ref{thm:fko}(b), 
we have that whp $h(G(n,p)) \geq h(G(n,1/2)) \geq \frac{n}{2\sqrt{\log_2{n}}} = \widetilde{h}(n,p)$.
Thus, $h$ is $(5/4,1)$-bounded from below by~$\widetilde{h}$.

Let $C=5$, 
$M=\frac{(96C\Delta)^2}{n^2p}$,  $m= \left \lceil \frac{n^2p}{19200C\Delta} \right \rceil $, and $q = 1- e^{-M}$.
By Lemma~\ref{lem:main-general}, we conclude that
 \begin{align}
 h(R) \geq 
\begin{cases}
\widetilde{h}(m,q) & \text{if } \Delta \leq  \sqrt{n^2 p \log \left (n^2 p \right )}, \label{eqn:h-widetilde h}\\
h(K_{m}) = m & \text{otherwise}. 
\end{cases}
 \end{align}

If $\Delta > \sqrt{n^2 p \log \left (n^2 p \right )}$, then by~\eqref{eqn:widetilde h} and~\eqref{eqn:h-widetilde h}, we have that whp $h(R)\geq m = \Omega\left ({n^2p}/{\Delta} \right )$.
Thus we may assume that $\Delta \le \sqrt{n^2 p \log \left (n^2 p \right )}$.

If $q\leq 1/2$, then 
$M\le \log 2$,
$\Delta < \sqrt{n^2 p}$, and 
$\frac{M}{2\log 2}\leq q\leq M$,
since $1 - e^{-x} - \frac{x}{2 \log 2} \geq 0$ for any $x \in [0, \log 2]$.
If $q \leq C'/m$, then 
$C' \geq mq \geq \frac{mM}{2\log 2} \ge \frac{96^2 C\Delta}{19200 \cdot 2\log 2} > 1.5\Delta $. In this case, 
\eqref{eqn:widetilde h} and~\eqref{eqn:h-widetilde h} imply that whp  
\[ h(R)\geq \widetilde{h}(m,q) = r\sqrt{m} \ge 
r\sqrt{\frac{n^2p}{19200C\Delta}}
> r\sqrt{\frac{1.5 n^2p}{19200C C'}}
=\Omega\left(
\sqrt{\frac{n^2p}{\log\Delta}}\right).\] 

If $C'/m < q \leq 1/2$, then again by~\eqref{eqn:widetilde h} and~\eqref{eqn:h-widetilde h}, we have that whp 
\[h(R)\geq \widetilde{h}(m,q)=\frac{m}{2\sqrt{\log_{1/(1-q)} (mq)}} =\Omega\left(\sqrt{\frac{n^2p}{\log\Delta}}\right),\]
where the last equality follows since
\begin{align*}
    \frac{m}{\sqrt{\log_{1/(1-q)} (mq)}} = \frac{m}{ \sqrt{-\log (mq) / \log (1-q)}} = \frac{m \sqrt{M}}{\sqrt{\log(mq)}},
\end{align*}
and $\frac{m \sqrt{M}}{\sqrt{\log(mq)}} = \Theta \left ( \sqrt{ \frac{n^2 p}{\log \Delta}}\right )$ as we have $m = \Theta \left (n^2 p / \Delta \right )$, $M = \Theta \left ( \Delta^2 / n^2 p \right )$, and $q = 1 - e^{-M} = \Theta \left (\Delta^2 / n^2 p \right )$.

Now, assume that $q>1/2$. Then we have $\Delta \geq \frac{\sqrt{n^2 p \log 2}}{96C}$
and $\Delta \ge \frac{1}{\Delta}\left(\frac{\sqrt{n^2 p \log 2}}{96C}\right)^2 = \Omega(m)$.
In this case, by~\eqref{eqn:widetilde h} and~\eqref{eqn:h-widetilde h}, we have that whp 
\[ h(R) \geq h(G(m,q)) \geq \widetilde{h}(m,q) = \frac{m}{2\sqrt{\log_2{m}}} = \Omega\left(\frac{n^2p}{\Delta \sqrt{\log{\Delta}} }\right).\]
This completes the proof.
\end{proof}

\begin{THM}[Birmele~\cite{birmele2003}]\label{thm:tw-cycle}
Let $k \geq 2$. Every graph with tree-width at least $k$ has a cycle of length more than $k$.
\end{THM}

A \emph{path-width} of a graph $G$ is defined as the minimum width of a tree decomposition of $G$ where the underlying tree of the decomposition is a path. Note that the path-width of a graph is larger than or equal to its tree-width.

\begin{THM}[Bienstock, Robertson, and Seymour~\cite{bienstock1991}]\label{thm:pw-forest}
Let $k \geq 1$ be an integer. 
Every graph with path-width at least $k-1$ has all $k$-vertex forests as minors.
\end{THM}

Then the following Corollary~\ref{cor:largeforestminor} follows immediately from Theorems~\ref{thm:fragile-main}(a),~\ref{thm:tw-cycle}, and~\ref{thm:pw-forest}.

\fragilecor*

\section{Perturbing a graph of small path cover number}\label{sec:spanningtrees}

Recall that a path cover number of a graph $G$ is the minimum number of vertex-disjoint paths covering all vertices of $G$.
In this section,  we are going to prove Theorem~\ref{thm:spanningforest}. Let us first prove a useful lemma to find many disjoint paths.

\begin{LEM}\label{lem:manypath}
    Let $k,n,t \in \mathbb{N}$.
    If $G$ is a graph on $n$ vertices whose path cover number is at most $t$, 
    then 
    it contains at least $n/k-t$ vertex-disjoint paths each having exactly $k$ vertices.
\end{LEM}
\begin{proof}
    Let $G$ be an $n$-vertex graph and 
    let $P_1, P_2 , \ldots, P_t$ be vertex-disjoint paths covering all vertices of~$G$. For each path $P_i$, we choose as many vertex-disjoint subpaths on exactly $k$ vertices as possible.
    Then all but $t(k-1)$ vertices of $V(G)$ can be covered by vertex-disjoint paths on $k$ vertices, hence there are at least 
    \[ 
    \frac{ n- t(k-1) }{k} \ge \frac{n}{k}-t
    \] 
    vertex-disjoint paths on exactly $k$ vertices.
\end{proof}

Let us restate Theorem~\ref{thm:spanningforest} and present its proof.
\spanningforest*
\begin{proof}
    Suppose that $p\ge \frac{11}{10n}$.
By Theorems~\ref{thm:randomtw} 
and \ref{thm:randomgenus}, whp $\tw(G) \ge \Omega(n)$ 
and $g(G) \ge \Omega \left (n^2 p \right )$.  
Then (a) holds because 
the tree-width of an $n$-vertex graph is at most $n-1$
and $\tw(R)\ge (\tw(H)+\tw(G))/2$.
Since adding one edge to a graph can increase the genus by at most~$1$, 
we have $(g(H)+g(G))/2 \le g(R)\le g(H)+ O\left (n^2p \right )$ whp, proving (b).
Observe that $h(R)\ge (h(H)+h(G))/2$ and therefore (c) holds. 

Hence, we may assume that $p < \frac{11}{10n}$. 
    We first prove the upper bounds in (a) and (b).
Since adding one edge to a graph can increase its genus and tree-width 
by at most~$1$, 
we have $\tw(R)\le \tw(H)+O\left (n^2p \right )$ and $g(R)\le g(H)+ O\left (n^2p \right )$ whp
and trivially $\tw(R)\le n-1$, 
proving upper bounds for (a) and (b).

For lower bounds, as $\tw(R)\ge \tw(H)$, $g(R)\ge g(H)$, and $h(R)\ge h(H)$, 
it suffices to show that 
\begin{align*}
\tw(R) & \ge \Omega\left (\min \left (n^2 p , n \right ) \right ),\\
g(R) &\ge \Omega \left (n^2 p \right ), \\
h(R) &\ge  \Omega\left (\sqrt{n^2 p} \right ).
\end{align*}

Let $k$ be an integer such that $\frac{29}{10np} \leq k \leq \frac{4}{np}$, 
and let $m := \left \lceil \frac{n}{k} - \frac{n^2p}{20} \right \rceil $. 
Observe that such an integer~$k$ exists because $p<\frac{11}{10n}$. 
Furthermore, $m \geq  \frac{n^2 p}{5}$.  By Lemma~\ref{lem:manypath}, 
$H$ has at least $m$ vertex-disjoint paths $P_1 , \dots , P_m$, each having exactly $k$ vertices. 
For each $1 \leq i<j \leq m$, 
the probability that $R$ has an edge between $P_i$ and $P_j$ is
\[ q := 1 - (1-p)^{k^2} \geq k^2p - \frac{k^2 (k^2-1)}{2} p^2\ge pk^2 \left(1- \frac{pk^2}{2}\right)\ge  pk^2 \left( 1 - \frac{8}{n^2p}\right).\]
Let $G':=G/E(P_1)/E(P_2)/\cdots / E(P_m)$, which is a graph obtained from $G$
by contracting edges in $\bigcup_{i=1}^m E(P_i)$.
Then $G'$ is a minor of $R$ and $G'$ contains $G(m,q)$ as a subgraph. 
Thus $f(R)\geq f(G(m,q))$ for any minor-monotone graph parameter $f(\cdot)$. 

Since  $n^2p=\omega(1)$ and $m \geq \frac{n^2 p}{5}$, for all sufficiently large $n$, we have  
\[ m q \geq \frac{n^2 p}{5} pk^2 \left( 1 - \frac{8}{n^2p}\right)
\geq \frac{n^2 p}{5} \cdot \left(\frac{29}{10}\right)^2  \frac{1}{n^2 p}
\left( 1 - \frac{8}{n^2p}\right) > \frac{5}{4}.\]
Hence, by Theorems~\ref{thm:randomtw}--\ref{thm:fko},  
whp 
\begin{align*}
\tw(R) &\geq \tw(G(m,q)) \ge  \Omega(m) \ge \Omega \left (n^2 p \right ), \\
g(R) &\geq g(G(m,q)) \ge  \Omega \left (m^2 q \right ) \ge \Omega \left (n^2 p \right ), \\
h(R) &\geq h(G(m,q)) \ge \Omega\left (\sqrt{m} \right ) \ge \Omega \left (\sqrt{n^2 p} \right ), 
\end{align*}
as desired.
\end{proof}

\section{Finding many large vertex-disjoint connected subgraphs}\label{sec:vertex-disjoint-subgraphs}
We begin with a lemma based on the following classical result of Ajtai, Koml\'os, and Szemer\'edi~\cite{ajtai1981} and Fernandez de la Vega~\cite{Fernandezdelavega1979}. 

\begin{LEM}\label{lem:longpath}
For every $n\in \mathbb{N}$, the random graph $G(n , 20/n)$ contains a path on at least $n / 5$ vertices with probability at least $1 - e^{-n}$.
\end{LEM}
We present a proof that is based on the idea of the proof in~\cite[Theorem 3.4 in Chapter 1]{random2016}.
We will use the following lemma.
\begin{LEM}[Ben-Eliezer, Krivelevich, and Sudakov~{\cite[Lemma 4.4]{BKS2012}}]\label{lem:pseudorandom}
    Let $n,k\in \mathbb{N}$ and $G$ be an $n$-vertex graph such that for every pair $(X,Y)$ of disjoint sets of $k$ vertices, there is at least one edge joining $X$ and $Y$. Then $G$ has a path whose length is at least $n-2k+1$.
\end{LEM}
\begin{proof}[Proof of Lemma~\ref{lem:longpath}]
We may assume that $n>5$.
Let $G := G(n, 20/n)$ and let $k := \lceil 2n/5 \rceil$. 
Then $k\le 2n/5 + 1$ and $n - 2k + 2 \geq n/5$. By the union bound, the probability that there exist two disjoint subsets $X,Y \subseteq V(G)$ of size $k$ with no edges between $X$ and $Y$ is at most
\begin{align*}
    \binom{n}{k} \binom{n-k}{k} \left(1 - \frac{20}{n}\right)^{k^2} 
    &\leq \left(\frac{en}{k}\right)^{2k} \cdot e^{-\frac{20k^2}{n}} \\
    &  = \left ( \frac{en}{k} \cdot e^{-\frac{10k}{n}} \right )^{2k}
    \leq \left( \frac{5e}{2} e^{-4}\right)^{2k}
    \leq \left(\frac{5}{2e^3} \right)^{\frac{4n}{5}}
    < e^{-n}.
\end{align*}
Thus, with probability at least $1 - e^{-n}$, there exists a path on at least $n/5$ vertices in $G$ by Lemma~\ref{lem:pseudorandom}.
\end{proof}
Now we are going to prove lemmas, which allow us to find many large vertex-disjoint connected subgraphs of $G(n,p)$.

\begin{LEM}\label{lem:vertex-disj-conn-1}
Let $p=p(n)\in (0,1]$, let $u>0$ be a real number, and let $G := G(n,p)$.
If $\mathcal C = \{ X_1 , \dots , X_m \}$ is a nonempty set of disjoint subsets of $V(G)$ such that $\abs{X_i} \geq u$ for all $1 \leq i \leq m$, then with probability at least $1 - m \exp\left(-pu^2 \right)$, $G$ has an edge between $X_i$ and $X_{i+1}$ for all $1\le i<m$.
\end{LEM}
\begin{proof}
For each $j\in [m-1]$, the probability that there are no edges of $G$ between $X_j$ and $X_{j+1}$  is 
    \[ 
     (1-p)^{\abs{X_j}\abs{X_{j+1}}}
    \le (1-p)^{u^2}
    \le \exp \left (-pu^2 \right ).
    \] 
Therefore the probability that there are no edges of~$G$ between $X_j$ and $X_{j+1}$ for some $j\in \{1,2,\ldots,m-1\}$ is at most $m \exp \left (- pu^2 \right )$ by the union bound.
\end{proof}

\begin{LEM}\label{lem:vertex-disj-conn-2}
    Let $p=p(n)\in (0,1]$, let $u>0$ be a real number, and let $G:=G(n,p)$.
    If $\mathcal C$ is a nonempty set of disjoint subsets of $V(G)$ such that
    \begin{enumerate}[label=\rm (\alph*)]
        \item $\abs{X} \geq u$ for all $X\in\mathcal C$, and
        
        \item $\abs{\mathcal C} \geq \max\left (40,\:40/\left(pu^2\right) \right)$,
    \end{enumerate}
    then with probability at least $1 - e^{-\abs{\mathcal{C}}}$, the set~$\mathcal C$ contains 
     $m = \lceil \abs{\mathcal C}/5 \rceil$ distinct sets $X_1,X_2,\ldots,X_m$
    such that $G$ has an edge between $X_i$ and $X_{i+1}$ for every $1\le i<m$.
\end{LEM}
\begin{proof}
    Let $q := 1 - (1 - p)^{u^2}$, and let $H$ be an auxiliary random graph such that $V(H) = \mathcal{C}$ and $\{ X, Y \} \in E(H)$ if and only if there exists an edge of $G$ between $X$ and $Y$ for distinct $X,Y \in \mathcal{C}$. Then, the probability that there are no edges of $G$ between $X$ and $Y$ is 
    \[ 
     (1-p)^{\abs{X}\abs{Y}}
    \le (1-p)^{u^2} = 1 - q
    \le \exp \left (-pu^2 \right ),
    \]
    so we can regard $G(\abs{\mathcal{C}},q)$ as a subgraph of $H$, coupling $G(\abs{\mathcal{C}},q)$ and $H$.
    Now we claim that $q \geq 20/\abs{\mathcal{C}}$. 
    To see this, if $pu^2 \geq \log 2$ then $q \geq 1 - \exp\left (-pu^2 \right ) \geq 1/2 \ge 20/\abs{\mathcal{C}}$, as $\abs{\mathcal{C}} \geq 40$. 
    Otherwise, if $pu^2 < \log 2$, then $q \geq 1 - \exp\left(-pu^2 \right) \geq pu^2 / 2 \geq 20/\abs{\mathcal{C}}$, as $\abs{\mathcal{C}} \geq 40/\left(pu^2 \right)$.
    
    Thus, by Lemma~\ref{lem:longpath}, 
    with probability at least $1 - e^{-\abs{\mathcal{C}}}$, $H$ has a path on at least $\abs{\mathcal{C}}/5$ vertices. Equivalently, with probability at least $1 - e^{-\abs{\mathcal{C}}}$, 
    there exist $m \geq \abs{\mathcal{C}}/5$ distinct sets $X_1 , \dots , X_m \in \mathcal{C}$ such that $G$ has an edge between $X_i$ and $X_{i+1}$ for every $1 \leq i < m$, as desired.
\end{proof}

\begin{LEM}\label{lem:decomp_conn}
    Let $k$ be a positive real.
    Let $G$ be a graph 
    and let $T_1 , T_2 , \ldots , T_m$ be vertex-disjoint connected subgraphs of $G$ such that 
    $V(G)=\bigcup_{i=1}^m V(T_i)$ and
    $\abs{V(T_i)}\le k$ for all $i\in [m]$.
    If $G$ has an edge from $T_i$ to $T_{i+1}$ for every $i\in [m-1]$, then $G$ contains vertex-disjoint connected subgraphs $R_1 , R_2 , \ldots , R_{m'}$ such that 
    $k\le \abs{V(R_i)}< 2k$ for all $i\in [m']$  and 
    $\abs{V(G)}-\sum_{i=1}^{m'}\abs{V(R_i)}< k$.
\end{LEM}
\begin{proof}
    We proceed by induction on $\abs{V(G)}$. The statement is trivial if $\abs{V(G)}<k$. Thus, we may assume that $\abs{V(G)}\ge k$ and therefore $m\ge 1$.
    Let $i$ be the minimum positive integer such that $\sum_{j=1}^i \abs{V(T_j)}\ge k$. By the assumption, $\sum_{j=1}^i \abs{V(T_j)}< 2k$. Let $R_1=T_1\cup T_2\cup \cdots \cup T_i$.
    We now apply the induction hypothesis to $G-\bigcup_{j=1}^i V(T_j)$ to find the remaining $R_2,R_3,\ldots,R_{m'}$.
\end{proof}

\section{Proof of the Key Lemma}\label{sec:key-connectinglem}

In order to prove Lemma~\ref{lem:main}, we will first 
present a lemma which allows us to obtain many vertex-disjoint connected subgraphs of $V(H\cup G(n,p))$ of size between $\Omega(1/(np))$ and $O(\Delta / (np))$, which cover almost all vertices in $V(H\cup G(n,p))$.  
A similar lemma with essentially the same proof appears in several papers; for example, \cite[Proposition 4.5]{krivelevich2006} and \cite[Lemma 3]{boettcher2019}.

\begin{LEM}[Tree partitioning lemma]\label{lem:frag}
    Let $\ell > 2$, $\Delta\ge 1$ be reals.
    Every connected graph $G$ with the maximum degree at most~$\Delta$
    admits a partition $V_0, V_1, \ldots, V_s$ of its vertex set with $s\ge 0$ such that 
    \begin{enumerate}[label=\rm (\alph*)]
        \item $0\le \abs{V_0}< \ell$, 
        \item $G[V_i]$ is connected for all $0\le i\le s$, 
        \item $\ell \le \abs{V_i}< \ell\Delta $ for all $1\le i\le s$.
    \end{enumerate}
\end{LEM}
\begin{proof}
    We proceed by induction on $\abs{V(G)}$. We may assume that $G$ is a tree by taking a spanning tree.
    We may assume that $\abs{V(G)}\ge \ell$ (thus $\Delta \geq 2$) because otherwise, we can take $V_0:=V(G)$. 
    Fix a vertex of degree $1$ as a root. 
    For each vertex $v$ of $G$, let $T_v$ be the subtree of $G$ rooted at $v$ consisting of $v$ and all of its descendants. Let $w$ be a vertex such that $\abs{V(T_w)}\ge \ell$ and the distance from the root to $w$ is maximum.
    Then $w$ has at most $\Delta-1$ children. (Note that this is still true even if $w$ is the root because $1\le \Delta-1$.)
    By the maximality of the distance from the root to~$w$, for every child $u$ of~$w$, we have $\abs{V(T_u)}<\ell$ and therefore $\abs{V(T_w)}<(\Delta-1)\ell+1\le \ell\Delta$. 
    Now $G':=G-V(T_w)$ is connected and we apply the induction hypothesis to $G'$ to find a partition $V_0,V_1,\ldots,V_s$ of $V(G')$. We take $V_{s+1}:=V(T_w)$.
\end{proof}

Now we restate our key lemma to be proved in this section.
\keylemma*

We first sketch our proof strategy. 
First, by applying Lemma~\ref{lem:frag} with $\ell:=96\frac{C}{np}$, we obtain many vertex-disjoint connected subgraphs of~$H$, called \emph{clusters}, each having at least $\ell$ vertices and at most $\ell \Delta$ vertices. 
Our goal is to merge them using edges of $G(n,p)$ to obtain many vertex-disjoint connected subgraphs, each having a similar number of vertices required by~(a).
To this end, we will conduct the following four steps.
\begin{enumerate}[label=\rm S\arabic*]
\item (Dyadic decomposition).  
We partition the set of clusters into \emph{levels} such that the number of vertices in each cluster is within a factor $2$ of each other in the same level.
\item (Connecting clusters in each level).  
For each level in the dyadic decomposition, whp we can find a linear ordering of many clusters so that $G(n,p)$ has an edge between consecutive clusters;  see Claim~\ref{claim:longpath}.  
\item (Connecting cluster between consecutive levels). 
Discarding some clusters in each linear ordering, whp we can concatenate all linear orderings into a single linear ordering, where the union of clusters in the linear ordering contains $\Omega(n)$ vertices and $G(n,p)$ has an edge between consecutive clusters in the ordering; see Claim~\ref{claim:merge}.
\item (Merging consecutive clusters into connected subgraphs). 
We merge consecutive clusters into connected subgraphs on $\Theta(\Delta / (np))$ vertices to obtain $\Theta \left (n^2 p / \Delta \right )$ vertex-disjoint connected subgraphs.
\end{enumerate}

\smallskip 
\begin{proof}[Proof of Lemma~\ref{lem:main}]
Let $n$ be sufficiently large and $\ell :=96 \frac{C}{np}$. 
As $\ell> 2$, 
by Lemma~\ref{lem:frag}, there is a collection $\mathcal{F}$ of disjoint sets $X_1 , \dots , X_s \subseteq V(H)$ satisfying the following. 
\begin{enumerate}[label=\rm (\Alph*)]
\item $\sum_{i=1}^{s} |X_i| > |V(H)| - 96\frac{C}{ np}$;
\item $H[X_i]$ is connected for each $i\in [s]$;
\item $96\frac{C}{np} \leq |X_i| < 96\frac{C\Delta}{np}$ for each $i\in [s]$. 
\end{enumerate}
We call each $X_i$ a \emph{cluster}. 
For $1 \leq i \leq \lceil \log_2 \Delta \rceil$, let 
\[
\mathcal{V}_i := \{ S  \in \mathcal{F}\colon 2^{i-1} \ell \leq |S| < 2^{i} \ell \},\quad V_i := \bigcup_{S \in \mathcal{V}_i} S,\quad u_i := 2^{i-1}\ell ,\quad n_i := |\mathcal{V}_i|,
\]
and
\begin{equation}\label{eqn:c_i}
c_i :=   
\max \left (\frac{80}{ pu_i } \:, \: \frac{n}{50 \log_2 \left (n^2 p \right )} \right )= 
\max \left(\frac{5n}{6\cdot 2^{i-1}C} , \frac{n}{50\log_2\left (n^2p \right )}
\right).
\end{equation}
Then, $\bigcup_{i=1}^{\lceil \log_2\Delta\rceil} \mathcal V_i = \mathcal F$.

We define $ \mathcal{A} := \{ i: 1 \leq i \leq \lceil \log_2 \Delta \rceil , ~ \abs{V_i} \geq c_i \} $ and call $i\in  \mathcal{A}$ a \emph{level}. Then there are at least $n/2$ vertices in $\bigcup_{i \in \mathcal{A}} V_i$, because
\begin{equation}\label{eqn:size}
\sum_{i \notin \mathcal{A}} \abs{V_i} \leq  
\sum_{i=1}^{\lceil \log_2 \Delta \rceil} \frac{5n}{6\cdot 2^{i-1}C} 
+\frac{n}{50 \log_2 \left (n^2 p \right )} \cdot \lceil \log_2 \Delta \rceil \leq
\frac{5n}{3C}+\frac{n}{50}
\leq \frac{5n}{12}, 
\end{equation}
where we use $C \geq 8$ for the last two inequalities.
Since each element of $\mathcal V_i$ has less than $2u_i$ vertices, for each $i\in \mathcal A$,  
\[ 
n_i \geq \frac{\abs{V_i}}{2u_i}>\frac{c_i}{2u_i} 
> \frac{40}{pu_i^2}
\ge \frac{40}{p(\Delta\ell)^2}
=\frac{40n^2p}{96^2 C^2\Delta^2}. 
\]

Let $G := G(n,p)$.

\begin{CLAIM}\label{claim:longpath}
    Whp, for every $i \in \mathcal{A}$, there exist $m_i = \lceil n_i / 5 \rceil$ distinct sets $S_{i,1} , \dots , S_{i,m_i} \in \mathcal{V}_i$ such that there is an edge of $G$ between $S_{i,j}$ and $S_{i,j+1}$ for every $j \in [m_i - 1]$.
\end{CLAIM}
\begin{subproof}[Proof of Claim 1]
    If $u_i \ge (n/p)^{1/3}$, then $n_i = \abs{\mathcal{V}_i} \leq n/u_i \leq \left (n^2 p \right )^{1/3}$ and $pu_i^2 \ge \left (n^2 p \right )^{1/3}$. 
    Then by Lemma~\ref{lem:vertex-disj-conn-1}, 
    with probability at least $1 - \left (n^2 p \right )^{1/3} e^{-\left (n^2 p \right )^{1/3}}$, $G$ has an edge between $S_{i,j}$ and $S_{i,j+1}$ for every $1 \leq j < n_i$, where $S_{i,1} , \dots , S_{i,n_i}$ is a fixed ordering of the subsets in~$\mathcal{V}_i$.

    If $u_i < (n/p)^{1/3}$, then 
    $n_i = \abs{\mathcal{V}_i} >\frac{\abs{V_i}}{2 u_i} \geq \frac{c_i}{2u_i}$ 
    because $\abs{S}< 2u_i$ for every $S \in \mathcal{V}_i$. 
    Since $c_i\ge 80/(pu_i)$, we have $n_i\ge 40/\left (pu_i^2 \right )$.
    In addition, as $c_i\ge \frac{n}{50\log_2\left (n^2p \right )}$, we have \[n_i\geq \frac{n}{100 u_i \log_2 \left (n^2 p \right )} > \frac{\left (n^2 p \right )^{1/3}}{100 \log_2 \left (n^2 p \right )} > 40 \] 
    because $n^2 p=\omega(1)$.

Thus, by Lemma~\ref{lem:vertex-disj-conn-2}, with probability at least $1 - e^{-n_i} \geq 1- \exp \left (-\frac{\left (n^2 p \right )^{1/3}}{100\log_2 \left (n^2p \right )} \right )$, there exist $m_i = \lceil n_i / 5 \rceil$ distinct sets $S_{i,1} , \dots , S_{i, m_i} \in \mathcal{V}_i$ such that there exists an edge of $G$ between $S_{i,j}$ and $S_{i,j+1}$ for every $j \in [m_i - 1]$.

Thus, since $\abs{\mathcal A} \leq \lceil \log_2 \Delta \rceil \leq \log_2 \left (n^2 p \right )$, 
the claim follows with probability at least \[ 1 - \abs{\mathcal A}\max\left(\left (n^2 p \right )^{1/3} e^{- \left (n^2 p \right )^{1/3}}, e^{-\frac{\left (n^2 p \right )^{1/3}}{100\log_2 \left (n^2p \right )}} \right) = 1 - o(1)\] by the union bound.
\end{subproof}

We now fix the edges of $G[V_i]$ for all $i \in \mathcal{A}$ given by   Claim~\ref{claim:longpath}. 
Then for each level $i \in \mathcal{A}$, let  
$S_{i,1} , \dots , S_{i,m_i}\in \mathcal{V}_i$ be $m_i$ distinct subsets such that there is an edge of $G$ going between $S_{i,j}$ and $S_{i,j+1}$.
For each $i\in \mathcal A$, 
let $\mathcal{V}_i' := \{S_{i,1} , \dots , S_{i,m_i} \}$ and $V_i' := \bigcup_{j=1}^{m_i} S_{i,j}$. 
As $n^2p=\omega(1)$, we may assume that $\ell=\frac{96C}{np}\le \frac{1}{36}n$.
Since each $S \in \mathcal{V}_i$ has size at most $2u_i$ and $m_i \geq n_i / 5$,  \eqref{eqn:c_i} and~\eqref{eqn:size} imply
\begin{align}
|V_i'| &\geq \frac{1}{10}|V_i| \geq \frac{n}{500 \log_2 \left (n^2 p \right )}, \label{eqn:v_i'}\\
\sum_{i \in \mathcal{A}}|V_i'| &\geq \frac{1}{10} \left ( n - \ell - \sum_{i \notin \mathcal{A}}|V_i| \right ) \geq \frac{n}{18}. \label{eqn:sumv_i'} 
\end{align}

\begin{CLAIM}\label{claim:merge}
Whp there exist distinct $T_{1},\dots, T_{s'} \in \mathcal{F}$ satisfying the following.
\begin{enumerate}[label=\rm (\alph*)]
\item $\sum_{i\in [s']} \abs{T_i} \geq n/20$; 
\item For each $j\in [s'-1]$, there exists an edge of $G$ between $T_{j}$ and $T_{j+1}$.
\end{enumerate}
\end{CLAIM}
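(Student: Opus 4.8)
The plan is to splice the $k:=\lvert\mathcal{A}\rvert$ lines produced in S2 into one long line, spending a fresh batch of random edges and discarding only a tiny fraction of each line. By a further application of two-round exposure (Observation~\ref{obs:two-round}) I write $G(n,p)=G'\cup G''$ with $G',G''$ independent copies of $G(n,p^{*})$, $p^{*}\ge p/3$; the proof of Claim~\ref{claim:longpath} uses only $G'$ (its estimates survive replacing $p$ by $p^{*}$ after adjusting absolute constants), so $G''$ is available here. Fix an ordering $i_1<\dots<i_k$ of $\mathcal{A}$ and recall $k\le\lceil\log_2(n^2p)\rceil$. Writing the line of level $i_t$ (from Claim~\ref{claim:longpath}) as $S_{i_t,1},\dots,S_{i_t,m_{i_t}}$, fix $\epsilon_0:=10^{-3}$, set $r_t:=\max\{1,\lfloor\epsilon_0 m_{i_t}\rfloor\}$, and call the first $r_t$ of these clusters the \emph{head} and the last $r_t$ the \emph{tail} of level $i_t$.

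First I would record that every head and tail spans many vertices. Each cluster of level $i_t$ has size in $[u_{i_t},2u_{i_t})$, so $m_{i_t}u_{i_t}\le\lvert V_{i_t}'\rvert<2m_{i_t}u_{i_t}$. If $m_{i_t}\ge 2/\epsilon_0$ then $r_t\ge\epsilon_0 m_{i_t}/2$, so the head (and likewise the tail) spans at least $r_t u_{i_t}\ge\tfrac{\epsilon_0}{4}\lvert V_{i_t}'\rvert$ vertices; if $m_{i_t}<2/\epsilon_0$ then $r_t=1$, the head is the single cluster $S_{i_t,1}$, of size at least $u_{i_t}>\tfrac{\epsilon_0}{4}\lvert V_{i_t}'\rvert$, and similarly for the tail. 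Hence, by \eqref{eqn:v_i'}, every head and tail spans at least $W:=\epsilon_0 n/(2000\log_2(n^2p))$ vertices.

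Next I would connect the lines. For each $t\in[k-1]$, the probability that $G''$ has no edge between the tail of level $i_t$ and the head of level $i_{t+1}$ is at most $(1-p^{*})^{W^2}\le\exp(-p^{*}W^2)=\exp(-cn^2p/(\log_2(n^2p))^2)$ for some constant $c>0$; since $x/(\log x)^2\to\infty$ and $n^2p=\omega(1)$, this is $o(1/\lceil\log_2(n^2p)\rceil)$, so a union bound over the $k-1$ pairs shows that whp, for every $t\in[k-1]$, there exist a cluster $S_{i_t,a_t}$ in the tail of level $i_t$ and a cluster $S_{i_{t+1},b_{t+1}}$ in the head of level $i_{t+1}$ joined by an edge of $R$. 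Put $b_1:=1$ and $a_k:=m_{i_k}$, and for $t\in[k]$ let $P_t$ be the segment $S_{i_t,b_t},S_{i_t,b_t+1},\dots,S_{i_t,a_t}$ of the line of level $i_t$; this is well defined because $b_t\le r_t\le m_{i_t}-r_t+1\le a_t$ (for $m_{i_t}\ge 2/\epsilon_0$ this uses $2r_t\le 2\epsilon_0 m_{i_t}<m_{i_t}$, while for $m_{i_t}<2/\epsilon_0$ the head is $\{S_{i_t,1}\}$ and the tail is $\{S_{i_t,m_{i_t}}\}$, so $b_t=1\le m_{i_t}=a_t$ and $P_t$ is the whole line). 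Concatenating $P_1,\dots,P_k$ yields a sequence $T_1,\dots,T_{s'}$ of clusters of $\mathcal{F}$: consecutive clusters inside one $P_t$ are adjacent in $R$ by Claim~\ref{claim:longpath}, and the last cluster $S_{i_t,a_t}$ of $P_t$ is adjacent in $R$ to the first cluster $S_{i_{t+1},b_{t+1}}$ of $P_{t+1}$ by the edges just found; this gives property~(2).

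Finally I would verify property~(1). From a level $i_t$ with $m_{i_t}\ge 2/\epsilon_0$ the segment $P_t$ omits at most $(b_t-1)+(m_{i_t}-a_t)<2r_t\le 2\epsilon_0 m_{i_t}$ clusters, hence fewer than $4\epsilon_0 m_{i_t}u_{i_t}\le 4\epsilon_0\lvert V_{i_t}'\rvert$ vertices, and from a shorter level it omits none; so by \eqref{eqn:sumv_i'},
\[
\sum_{i\in[s']}\lvert T_i\rvert\ \ge\ (1-4\epsilon_0)\sum_{t\in[k]}\lvert V_{i_t}'\rvert\ \ge\ \frac{(1-4\epsilon_0)n}{18}\ >\ \frac{n}{24},
\]
since $4\epsilon_0=4\cdot10^{-3}<\tfrac14$. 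The main obstacle is exactly the tension built into the middle two steps: the heads and tails must be thin enough (an $O(1/\log(n^2p))$ fraction of each line, so that discarding them costs an $o(1)$ fraction of the $\ge n/18$ vertices guaranteed by \eqref{eqn:sumv_i'}) yet fat enough that the tail-to-head connecting edge appears whp, even though a single pair of clusters may be adjacent only with probability $\Theta(p\,u_{i_t}^2)$, which can be tiny for the low levels. What reconciles the two demands is the uniform lower bound $W=\Omega(n/\log(n^2p))$ on the number of vertices in \emph{every} head and tail — valid for a long line because its head is then an $\epsilon_0$-fraction of $\ge\lvert V_{i_t}'\rvert/2$ vertices, and for a short line because its head is then a single cluster, necessarily of size $\ge u_{i_t}>\lvert V_{i_t}'\rvert/(2m_{i_t})=\Omega(n/\log(n^2p))$.
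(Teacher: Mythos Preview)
Your argument is correct and essentially the same as the paper's: carve a head and a tail from each level's line, show via \eqref{eqn:v_i'} that each spans $\Omega(n/\log_2(n^2p))$ vertices, connect consecutive tail--head pairs by a union bound over at most $\lceil\log_2(n^2p)\rceil$ events, and concatenate; the paper defines heads and tails by a $\tfrac{1}{10}$ \emph{vertex}-fraction of $|V_{i_j}'|$ rather than your $\epsilon_0$-fraction of the cluster count, but that is cosmetic.

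The one unnecessary step is your extra two-round exposure. Claim~\ref{claim:longpath} only reveals edges of $G(n,p)$ between clusters in the \emph{same} level $\mathcal{V}_i$, whereas the tail--head edges needed here run between distinct levels $\mathcal{V}_{i_t}$ and $\mathcal{V}_{i_{t+1}}$; hence, conditioned on the outcome of Claim~\ref{claim:longpath} (which fixes the paths and therefore the heads and tails), these between-level edges are still independent $\mathrm{Bernoulli}(p)$ variables, and the paper simply uses the full $p$. This also sidesteps your hand-wave about re-running Claim~\ref{claim:longpath} with $p^*$ ``after adjusting absolute constants'': with $p^*\ge p/3$ the bound $q_i\ge 20/n_i$ in the $\mathcal{A}_2$ case would need $n_i p u_i^2\ge 120$, whereas \eqref{eqn:prob} only yields $\ge 40$, so the adjustment is not as free as you suggest under the standing hypothesis $C\ge 8$.
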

\begin{subproof}[Proof of Claim 2]
So far, we have fixed the edges of $G[V_i]$ for all $i \in \mathcal{A}$. 
In this claim, we will only use the edges of $G[V_i , V_{i'}]$ for distinct $i,i' \in \mathcal{A}$. This will ensure that each edge of $G$ is exposed at most once.

For each $i\in \mathcal A$, let $a_i$ be the minimum such that $L_i^1:=\bigcup_{j=1}^{a_i} S_{i,j}$ has at least $\frac{1}{20}\abs{V_i'}$ vertices.
Similarly, let $b_i$ be the maximum such that 
$L_i^2:=\bigcup_{j=b_i}^{m_i}S_{i,j}$ has at least $\frac{1}{20}\abs{V_i'}$ vertices.
Then $a_i \leq b_i$.
To see this, if $a_i \geq b_i + 1$, then 
\begin{align*}
    \abs{V_i'} = \sum_{k=1}^{m_i} \abs{S_{i,k}} = \sum_{k=1}^{b_i - 1} \abs{S_{i,k}} + \abs{S_{i,b_i}} + \sum_{k=b_i + 1}^{m_i} \abs{S_{i,k}} < \abs{V_i'}/20 +  \abs{S_{i,b_i}} + \abs{V_i'}/20,
\end{align*}
since $b_i - 1 \leq a_i - 2$. Thus, $\abs{S_{i,b_i}} > 9\abs{V_i'}/10$, which contradicts that $a_i$ is the minimum such that $\bigcup_{j=1}^{a_i} S_{i,j}$ has at least $\abs{V_i'}/20$ vertices, as $\bigcup_{j=1}^{a_i - 1} S_{i,j}$ already has at least $9 \abs{V_i'}/10$ vertices.

Clearly, $\sum_{j=a_i}^{b_i} \abs{S_{i,j}}\ge \abs{V_i'}-
\sum_{j=1}^{a_i-1} \abs{V_i'}
-\sum_{j=b_i+1}^{m_i} \abs{V_i'}
\ge \frac{9}{10}\abs{V_i'}$ for each $i\in\mathcal A$.

Then by~\eqref{eqn:v_i'}, we have $|L_i^1|,|L_i^2| \geq \frac{n}{10000 \log_2 \left (n^2 p \right )}$. 
For $i,i' \in \mathcal A$ with $i < i'$,
exposing the edges of $G[V_{i}, V_{i'}]$,
the probability that $G$ has no edge between $L_i^2$ and $L_{i'}^1$ is at most 
\[
(1 - p)^{|L_i^2||L_{i'}^1|} \leq  \exp\left(- \frac{ n^2 p}{10000^2 \cdot (\log_2 \left (n^2 p \right ))^2} \right) . 
\]

Let $\mathcal{A} = \{i_1 , \dots , i_t \}$ with $i_1 < \dots < i_t$.
As $\abs{\mathcal{A}} \leq \lceil \log_2 \Delta\rceil \leq \log n^2 p$ and $n^2p=\omega(1)$, a union bound implies that whp there is an edge of~$G$  between $L_{i_j}^2$ and $L_{i_{j+1}}^1$ for all $j\in [t-1]$. 
Hence whp, for each $j\in [t]$, there exist $b_{i_j} \leq \beta_j \leq m_{i_j}$ and $1 \leq \alpha_{j+1} \leq a_{i_{j+1}}$  such that 
\begin{equation}\label{stat:prob}
\text{$G$ has an edge between $S_{i_j , \beta_j}$ and $S_{i_{j+1} , \alpha_{j+1}}$}.
\end{equation}
Let $T_1,\dots, T_{s'}$ be the following sequence of subsets in $\mathcal F$ 
\[ 
    S_{i_1,1},\dots, S_{i_1,\beta_1}, S_{i_2, \alpha_2}, S_{i_2,\alpha_2+1},\dots, S_{i_2,\beta_2}, S_{i_3,\alpha_3},\dots, S_{i_{t-1},\beta_{t-1}}, S_{i_t, \alpha_t}, S_{i_t,\alpha_{t}+1},\dots, S_{i_t, m_t}.
\] 
Then we have 
\[
\sum_{j=1}^{s'} \abs{T_{j}} \geq \sum_{i \in \mathcal{A}}  \frac{9}{10} \abs{V_{i}'} \overset{\eqref{eqn:sumv_i'}}{\geq} \frac{n}{20},
\]
and by~\eqref{stat:prob}, $G$ has an edge between $T_{j}$ and $T_{j+1}$ for all $j\in [s'-1]$, as desired.
\end{subproof}

Let $T_{1},\dots, T_{s'} \in \mathcal F$ be  subsets provided by Claim~\ref{claim:merge}. 
For each $1 \leq i \leq s'$, the graph $H[T_i]$ is a connected subgraph of $H$ of at most $\ell \Delta$ vertices, and whp there exists an edge of $G$ between $T_j$ and $T_{j+1}$ for every $1 \leq j < s'$. 

Recall that $R = H \cup G$. By Lemma~\ref{lem:decomp_conn}, $R[T_1 \cup \dots \cup T_{s'}]$ contains vertex-disjoint connected subgraphs $R_1 , \dots , R_m$ where $\ell \Delta \leq \abs{V(R_i)} < 2 \ell \Delta$ for all $i \in [m]$ and 
\begin{align*}
    m \cdot (2 \ell \Delta) \geq \sum_{i=1}^{m} \abs{V(R_i)} > \sum_{i=1}^{s'} \abs{T_i} - \ell \Delta \overset{\text{Claim}~\ref{claim:merge}}{\geq}
    \frac{n}{20} - \ell \Delta.
\end{align*}
Thus, since $\ell=\frac{96C}{np} $, we have
\begin{align*}
    m \geq \frac{1}{2\ell\Delta} \left (\frac{n}{20}-\ell\Delta \right ) = \frac{n^2 p}{3840 C \Delta} - \frac{1}{2} \geq 
    \frac{n^2 p}{9600 C \Delta},
\end{align*}
where the last inequality follows from $\Delta \leq n^2 p (4800 C)^{-1}$. This completes the proof of Lemma~\ref{lem:main}.
\end{proof}

\section{Sharpness of the results}\label{sec:sharpness}
In this section, we will show that the results are best possible. 
To this end, we need the following two lemmas, the first of which is straightforward.
\begin{LEM}\label{lem:contract_forest}
Let $G$ be a graph, and let $V_1 , \dots , V_t \subseteq V(G)$ be disjoint subsets such that
\begin{itemize}
\item $G[V_i]$ is a tree for each $i\in [t]$; 
\item there is at most one edge between $V_i$ and $V_j$ for every pair of distinct $i,j \in [t]$,
\end{itemize}
and let $G^*$ denote  the graph obtained from $G$ by contracting each $V_i$ to a vertex. If $G^*$ is a forest, then so is~$G$.
\end{LEM}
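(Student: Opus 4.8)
\textbf{Proof plan for Lemma~\ref{lem:contract_forest}.}

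The plan is to prove the contrapositive: if $G$ contains a cycle, then $G^*$ contains a cycle. So suppose $G$ has a cycle $D$, say with vertex sequence $x_1 x_2 \cdots x_k x_1$ (all $x_i$ distinct, $k\ge 3$). Each vertex $x_i$ lies in at most one of the sets $V_1,\dots,V_t$, or in none of them; I will track how the cycle $D$ projects down to $G^*$ under the contraction map $\pi$ that sends every vertex of $V_j$ to the single contracted vertex $v_j$ and fixes every other vertex.

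First I would argue that the image $\pi(D)$ is a closed walk in $G^*$: consecutive vertices $x_i,x_{i+1}$ of $D$ are adjacent in $G$, so either $\pi(x_i)=\pi(x_{i+1})$ (they lie in the same $V_j$) or $\pi(x_i)\pi(x_{i+1})$ is an edge of $G^*$; deleting the ``stationary'' steps leaves a closed walk. The key point is to extract an actual cycle from this closed walk, and for that I need to rule out the degenerate possibility that $\pi(D)$ collapses entirely or traverses a single edge of $G^*$ back and forth. The hypothesis that $G[V_j]$ is a tree (hence acyclic) means the cycle $D$ is not contained in any single $V_j$, so $\pi(D)$ is not a single point. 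The hypothesis that there is at most one edge between $V_i$ and $V_j$ is what prevents the closed walk from consisting of just two distinct images $v_i,v_j$ joined by ``two parallel edges'': any two edges of $D$ running between $V_i$ and $V_j$ would have to be the same edge of $G$, contradicting that $D$ is a cycle (which uses each edge at most once).

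Concretely, I would consider the contracted cycle at the level of which sets it visits: reading around $D$, record the sequence of sets (or isolated vertices) that the $x_i$ belong to, and collapse maximal runs in the same set. This yields a cyclic sequence $W = w_1 w_2 \cdots w_\ell w_1$ in $G^*$ with $w_r \ne w_{r+1}$ for all $r$ (indices mod $\ell$). If $\ell \ge 3$, then consecutive images are joined by edges of $G^*$ and $W$ is a closed walk with no immediate backtrack; within such a walk one can always find a genuine cycle (take a shortest closed subwalk), so $G^*$ has a cycle. If $\ell = 2$, then $D$ alternates between exactly two sets $V_i$ and $V_j$ (or a set and a single vertex, etc.); since $D$ returns to its start, it crosses between these two blocks an even number $\ge 2$ of times, and if it crosses at least twice then two of those crossing edges are distinct edges of $G$ between $V_i$ and $V_j$, contradicting hypothesis~(ii). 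If it crosses exactly twice, $D$ splits into two internally-disjoint paths $P,P'$ each with both endpoints of the ``crossing'' pair; at least one of $P,P'$ has an internal vertex in the same block as... — here I would instead observe that $D$ minus its two crossing edges falls into two paths, each lying entirely inside one of $G[V_i]$ or $G[V_j]$, and each such path has length $\ge 1$ only if it connects two distinct vertices of a tree, which is fine, but then $D$ itself is a cycle using exactly two edges between $V_i$ and $V_j$ — again contradicting~(ii) unless those two edges coincide, impossible in a cycle. The case $\ell = 1$ is exactly ``$D \subseteq V_j$,'' excluded since $G[V_j]$ is a forest. Hence $\ell \ge 3$ and $G^*$ is not a forest.

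The main obstacle is the bookkeeping in the $\ell = 2$ case: making precise that a cycle alternating between two blocks must use two \emph{distinct} inter-block edges, and hence is forbidden. Everything else is the standard fact that contracting connected subgraphs maps cycles to closed walks and that a closed walk without backtracking of length $\ge 3$ contains a cycle; I would state that as a one-line observation rather than belabor it. Once the $\ell=2$ obstruction is dispatched using hypothesis~(ii) and the $\ell\le 1$ case using hypothesis~(i), the contrapositive is complete.
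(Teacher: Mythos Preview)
The paper explicitly omits the proof of this lemma, so there is no argument to compare against; your contrapositive strategy (project the cycle to $G^*$, rule out degeneracies, extract a cycle from the resulting closed walk) is the natural one and is essentially correct.

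There is, however, one real gap. For $\ell \geq 3$ you assert that $W$ is a closed walk ``with no immediate backtrack'' and then extract a cycle via a shortest closed subwalk. But collapsing maximal runs only guarantees $w_r \neq w_{r+1}$; it does not by itself prevent $w_{r-1} = w_{r+1}$, i.e.\ a block pattern $\ldots, V_i, V_j, V_i, \ldots$ with $i \neq j$. What actually rules this out is hypothesis~(ii), by exactly the reasoning you deploy for $\ell = 2$: the two consecutive transitions $w_{r-1} \to w_r$ and $w_r \to w_{r+1}$ would be two distinct edges of the cycle $D$ between the same pair of blocks, contradicting that there is at most one such edge. Once you invoke~(ii) to establish $w_{r-1} \neq w_{r+1}$ for every $r$, the whole case analysis collapses: $\ell = 1$ is excluded by~(i), $\ell = 2$ is excluded because it forces $w_0 = w_2$, and for $\ell \geq 3$ your shortest-closed-subwalk extraction is valid. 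So the fix is simply to promote your $\ell = 2$ reasoning and apply it uniformly.

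A minor caveat: as written the lemma allows vertices of $G$ outside $\bigcup_i V_i$, and in that situation both your $\ell = 2$ argument and the lemma itself (with $G^*$ read as a simple graph) fail --- take $G$ a triangle on $\{a,b,c\}$ with $t = 1$ and $V_1 = \{a,b\}$, so that $G^*$ is a single edge. This is not a defect of your plan but an ambiguity in the statement; in the paper's only application (Lemma~\ref{lem:example}) the sets $V_i$ partition $V(G)$, so you may safely add that hypothesis.
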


\begin{LEM}\label{lem:example}
Let $C>0$ be an absolute constant, let $p = p(n) \in [0,1]$, and let $1 \leq x = x(n) \leq n$ be an integer-valued function such that $x=o(n)$ and $npx = o(1)$. Let $t$ be an integer such that $t \leq Cn/x$, and let $B_1 , B_2 , \dots , B_t$ be vertex-disjoint trees on at most $x$ vertices. Let $H_0$ be the disjoint union of $B_1 , \dots , B_t$, and let $R_0 := H_0 \cup G(n,p)$. Then whp $R_0$ is a forest. 
\end{LEM}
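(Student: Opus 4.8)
The plan is to show that $R_0 := H_0 \cup G(n,p)$ is whp a forest with small components by combining Lemma~\ref{lem:contract_forest} with a first-moment argument on the random graph $G(n,p)$ after contracting each tree $B_i$ to a single vertex. First I would set $G^*$ to be the graph on $t$ vertices $\{v_1,\dots,v_t\}$ obtained from $R_0$ by contracting each $B_i$ to $v_i$ (discarding isolated vertices of $G(n,p)$ not in any $B_i$, or rather viewing them as their own singleton trees so Lemma~\ref{lem:contract_forest} still applies). Note $G^*$ is exactly a random graph on $t$ vertices where the probability that $v_iv_j$ is an edge is $1-(1-p)^{|V(B_i)||V(B_j)|} \le p|V(B_i)||V(B_j)| \le px^2$. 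Since $px^2 \le (npx)(x/n)\cdot(1/p)\cdot p$... more cleanly, $t \cdot px^2 \le (n/x)\cdot px^2 = npx = o(1)$, so the expected number of edges in $G^*$ is $\binom{t}{2}\cdot O(px^2) = O(t^2 p x^2) = O((npx)^2/(px^2)\cdot px^2)$... the clean bound is $\mathbb{E}[e(G^*)] \le \binom{t}{2} px^2 \le \tfrac{1}{2}t\cdot(npx) \le \tfrac{n}{2x}\cdot o(1)$, which is not automatically $o(1)$; so I must instead bound the number of \emph{cycles}.

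The key step is a first-moment bound on the expected number of cycles in $G^*$. The expected number of cycles of length $k$ in $G^*$ is at most $\binom{t}{k}\tfrac{(k-1)!}{2}(px^2)^k \le \tfrac{1}{2}(t\,px^2)^k = \tfrac12 (npx \cdot (tx/n) \cdot \ldots)$; using $t \le n/x$ gives $t p x^2 \le npx = o(1)$, so the expected total number of cycles is at most $\sum_{k\ge 3}\tfrac12(npx)^k = O((npx)^3) = o(1)$. Hence whp $G^*$ has no cycle at all between distinct contracted vertices, i.e.\ $G^*$ is a forest, PROVIDED there is at most one edge of $G(n,p)$ realized between each pair $B_i,B_j$ — but multi-edges collapse to a single edge under contraction anyway, so $G^*$ as a simple graph is a forest. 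Then Lemma~\ref{lem:contract_forest} applies directly (each $V(B_i)$ induces a tree in $R_0$, and $G^*$ is obtained by contracting them and is a forest), yielding that $R_0$ itself is a forest whp.

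For the component-size bound, I would observe that a component of $R_0$ corresponds to a component of $G^*$, and a component of $G^*$ with $c$ vertices gives a component of $R_0$ with at most $cx$ vertices. So it suffices to show whp every component of $G^*$ has $O(\log(n/x))$ vertices. Since $G^*$ is dominated by a binomial random graph $G(t, px^2)$ with $t\le n/x$ and $t\cdot px^2 = o(1)$ well below the critical window $1/t$, standard subcritical branching-process / first-moment estimates (as in the introduction, or \cite{random2016}) give that whp every component has size $O(\log t) = O(\log(n/x))$: the expected number of trees on $k$ labelled vertices appearing as a component is at most $\binom tk k^{k-2}(px^2)^{k-1} \le t (e\,t\,px^2)^{k-1}$, which summed over $k \ge K\log t$ is $o(1)$ for a suitable constant. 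Multiplying by $x$ gives component size $O(x\log(n/x))$ in $R_0$, as claimed.

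The main obstacle is bookkeeping the contraction carefully: the vertices of $G(n,p)$ outside $\bigcup_i V(B_i)$ must be handled (treat each as a singleton tree $B_i$ so that the hypotheses of Lemma~\ref{lem:contract_forest} are met on all of $V(R_0)$, noting $x\ge 1$), and one must confirm that the "at most one edge between $V_i$ and $V_j$" hypothesis of Lemma~\ref{lem:contract_forest} is not actually needed in the form of an event — it is automatic because $G^*$ is by definition simple, and the lemma's conclusion only needs $G^*$ (the simple contraction) to be a forest. The only genuinely probabilistic input is the cycle-count first moment, which is routine given $npx = o(1)$ and $t \le n/x$. I would present the cycle bound and the component-size bound as two short displayed computations and then invoke Lemma~\ref{lem:contract_forest}.
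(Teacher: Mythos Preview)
Your overall strategy matches the paper's, but you have a genuine error in how you invoke Lemma~\ref{lem:contract_forest}. You claim the hypothesis ``there is at most one edge between $V_i$ and $V_j$'' is ``not actually needed'' because $G^*$ is simple by definition. This is wrong: the hypothesis is essential to the conclusion, not merely to the well-definedness of $G^*$. For a counterexample, let $G$ be the $4$-cycle $v_1v_2v_3v_4$ with $V_1=\{v_1,v_2\}$ and $V_2=\{v_3,v_4\}$; each $G[V_i]$ is a tree, the simple contraction $G^*$ is a single edge (hence a forest), yet $G$ is not a forest. In your setting, two random edges between $B_i$ and $B_j$, together with the paths joining their endpoints inside the trees $B_i$ and $B_j$, form a cycle in $R_0$ that disappears upon contraction. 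Relatedly, you assert that ``each $V(B_i)$ induces a tree in $R_0$'' without justification; this holds in $H_0$, but a random edge of $G(n,p)$ landing inside some $B_i$ creates a cycle in $R_0[B_i]$.

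Both gaps are easily patched by the union bounds the paper supplies before applying Lemma~\ref{lem:contract_forest}: the probability that $G(n,p)$ places an edge inside some $B_i$ is at most $t\cdot p x^2/2 \le npx/2 = o(1)$, and the probability that some pair $B_i,B_j$ receives at least two random edges is at most $\binom{t}{2}(x^2 p)^2 \le (npx)^2 = o(1)$. Once these two events are excluded, your first-moment cycle count on $G^*$ and your subcritical component-size estimate go through exactly as you describe, and Lemma~\ref{lem:contract_forest} then applies legitimately.
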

\begin{proof}
We first claim that whp $R_0[B_i]$ is a tree for each $i \in [t]$. It suffices to prove that whp no edge of $G(n,p)$ lies inside $V(B_i)$ for any $i\in [t]$.
To show this, for each $i\in [t]$ let $\mathcal E_i$ denote the event that there is an edge of $G(n,p)$ that lies inside $V(B_i)$; then $\mathbb P(\mathcal E_i) \leq px^2 / 2$, because the expected number of edges of $G(n,p)$ that lie inside $V(B_i)$ is at most $p \abs{V(B_i)}^2 / 2 \leq px^2 / 2$. Thus, by a union bound we have  
\[ \mathbb{P} \left(\bigcup_{i=1}^t \mathcal E_i \right) \le t \cdot \frac{px^2}{2} \leq \frac{Cnpx}{2} = o(1).\]

Next, we claim that whp $R_0$ has at most one edge between $B_i$ and $B_j$ for all $i \neq j \in [t]$. Equivalently, we show that whp there exists at most one edge of $G(n,p)$  between $B_i$ and $B_j$ for all $i\neq j\in [t]$.
For any distinct pairs $(u_1 , v_1), (u_2 , v_2) \in V(B_i) \times V(B_j)$, let $\mathcal{E}_{u_1, v_1, u_2, v_2}$ be the event that both $u_1 v_1$ and $u_2 v_2$ appear in $G(n,p)$. 
Then, by a union bound we obtain \[ \mathbb{P} \left(\bigcup_{(u_1,v_1) \ne (u_2,v_2)} \mathcal{E}_{u_1,v_1,u_2,v_2}\right) \le  \abs{V(B_i)}^2 \abs{V(B_j)}^2 p^2 \leq x^4 p^2. \]
Therefore the probability that $G(n,p)$ has at least two edges between $B_i$ and $B_j$ is at most $x^4p^2$.
Since there are $\binom{t}{2}$ unordered pairs $i\neq j\in [t]$, the probability that $G(n,p)$ has two edges between $B_i$ and $B_j$ for some $i\neq j\in [t]$ is at most 
\[\binom{t}{2} x^4 p^2 \leq 
\frac12 \left(\frac{Cn}{x}\right)^2 x^4 p^2 = \frac12 (Cnpx)^2 = o(1).\]

Let $R_0^*$ be the graph obtained from $R_0:= H_0 \cup G(n,p)$ by contracting $B_i$ into a single vertex $v_i$ for each $i\in [t]$.
Because the probability that $G(n,p)$ has an edge between $B_i$ and $B_j$ is at most
\begin{equation}\label{eqn:probpart}
1 - (1 - p)^{|B_i||B_j|} \leq 1 - (1 - p |B_i||B_j|) = p |B_i||B_j| \leq px^2, 
\end{equation}
we have $v_i v_j \in E(R_0^*)$ with probability at most $px^2\leq npx = o(1)$. Since $R_0^*$ has at most $t$ vertices, the expected number of cycles of length $i$ in $R_0^*$ is at most
\[ t^i \cdot \left(px^2 \right)^i = \left(tpx^2 \right)^i \leq \left(Cnpx \right)^i. \]
Therefore, the expected number of cycles in $R_0^*$ is at most $\sum_{i \geq 3} (Cnpx)^{i} = o(1)$. Thus, whp $R_0^*$ is a forest, and by Lemma~\ref{lem:contract_forest}, whp $R_0$ is also a forest.
\end{proof}

\subsection{Examples}

The following two examples show that it is necessary to assume $\Delta = O\left (n^2 p \right )$ in Theorem~\ref{thm:fragile-main}, when $p \leq \varepsilon/n$ for some absolute constant $\varepsilon \in (0,1)$.

\begin{EX}\label{example1}
    Assume $p = \Omega(1/n)$ with $p \leq \varepsilon / n$ for some $\varepsilon \in (0,1)$.
    Let $H$ be an $n$-vertex star. Then we have $\Delta(H)=n-1$ and $p=\varepsilon/n$ and whp $G(n,p)$ is outerplanar, and thus $H \cup G(n,p)$ is planar.
\end{EX}

\begin{EX}\label{example2}
    Assume $\Delta = \omega\left( n^2 p \right)$ and $p = o(1/n)$. 
    Let $c(n)$ be a function with $c=c(n)=\omega(1)$, and assume that $n^2 p =\omega(1)$ and $p \leq 1/(cn)$.
    
    Let $cn^2 p /4 \leq t = t(n) \leq cn^2 p /2$ be an integer. Then $t \leq n/2$.
    Let $B_1 , \dots , B_t$ be vertex-disjoint stars on either $\left \lfloor \frac{n-1}{t} \right \rfloor$ or $\left \lceil \frac{n-1}{t} \right \rceil$ vertices such that 
    each $B_i$ has a center
    vertex $r_i$, and 
    \begin{align*}
        \abs{V(B_1)} + \dots + \abs{V(B_t)} = n - 1.   
    \end{align*}
    
    Let $x = x(n) = \left \lceil \frac{4}{cnp} \right \rceil$.
    Then $\abs{V(B_i)} \leq \lceil n/t \rceil \leq x$ for each $i \in [t]$.
    Since $\frac{4}{cnp}\ge 4$, we have $x\le \frac{5}{4}\frac{4}{cnp}$
    and so $t\le \frac{cn^2p}{2} \le \frac{5n}{2x}$.   
    Let $H$ be an $n$-vertex rooted tree obtained from the disjoint union of $B_1,\ldots,B_t$ by adding a root vertex $r$ that is adjacent to $r_1 , \dots , r_t$. 
    Let $R := H \cup G(n,p)$. 
    As $p\leq 1/(cn)$, $n^2p = \omega(1)$, and $x \leq 5/(cnp)$, we have
    \begin{equation*}
    1 \leq x \le \frac{5n}{c}\frac{1}{n^2p}=o(n)\text{\:\: and \:\:}npx \le \frac{5}{c} = o(1).
    \end{equation*}
    By Lemma~\ref{lem:example}, we deduce that whp $R-r$ is a forest. 
    Thus, whp
    $\tw(R) \leq \tw(R-r) + 1 \leq 2$, 
    $g(R) = 0$, and $h(R) \leq h(R-r) + 1 \leq 3$.
\end{EX}

The following two examples show that Theorem~\ref{thm:fragile-main} is best possible for tree-width and the Hadwiger number up to a $\log \left (n^2 p \right )$ factor when $p \leq \varepsilon/n$ for some absolute constant $\varepsilon \in (0,1)$.

\begin{EX}\label{example3}
    Let $\Delta\ge 3$ be an integer. 
    Assume $p = \Omega(1/n)$ with $p \leq \varepsilon / n$ for some $\varepsilon \in (0,1)$. 
    Let $H$ be an $n$-vertex tree obtained from a path on $\lceil  n/\Delta \rceil$ vertices by attaching either $\Delta - 1$ or $\Delta - 2$ leaves to each vertex of the path. Then the maximum degree of $H$ is either $\Delta$ or $\Delta + 1$. Let $R := H \cup G(n,p)$. We claim that whp 
    \[\tw(R) \leq 3 + n/\Delta 
    \quad\text{and}\quad
    h(R) \leq 4 + n/\Delta.\]
    Let $L$ be the set of leaves in $H$. It is well known~\cite{erdos1960} that whp every connected component of $R[L]$ in $G(n,p)$ has at most one cycle, hence $\tw(R[L]) \leq 2$. 
    Since deleting a vertex can decrease its tree-width by at most $1$, 
    whp $\tw(R) \leq \tw(R[L]) + \lceil n/\Delta \rceil \leq 3 + n/\Delta$. 
    The upper bound on $h(R)$ follows from the inequality that $h(R) \leq \tw(R) + 1$.
\end{EX}

\begin{EX}\label{example4}
    Assume $p = o(1/n)$.
    Let $c=c(n)$ be a function with $c(n)=\omega(1)$ and let $3 \leq \Delta = O \left (n^2 p \right )$ be an integer. We also assume that $n^2 p = \omega(1)$ and $p \leq 1/(cn)$.

    Let $cn^2 p /4 \leq t = t(n) \leq cn^2 p /2$ be an integer. Then $t \leq n/2$.   
    Let $P$ be a path on $\left \lceil \frac{t}{\Delta - 2} \right \rceil$ vertices
    and let $v_1 , \dots , v_{\lceil t/(\Delta-2)  \rceil}$ be the vertices of $P$.
    Let $B_1 , \dots , B_{t}$ be trees 
    on either $\left \lfloor \frac{n - \abs{V(P)}}{t} \right \rfloor$ or $\left \lceil \frac{n - \abs{V(P)}}{t} \right \rceil$ vertices, each having maximum degree at most $3$ and  
    satisfying 
    \begin{align*}
        \abs{V(B_1)} + \cdots +\abs{V(B_t)} = n - \abs{V(P)}.
    \end{align*} 

    Let $x = x(n) = \left \lceil \frac{4}{cnp} \right \rceil$. 
    Then $\abs{V(B_i)} \leq \lceil n/t \rceil \leq x$ for each $i \in [t]$.
    Since $\frac{4}{cnp}\ge 4$, we have $x\le \frac{5}{4}\frac{4}{cnp}$
    and so $t\le \frac{cn^2p}{2} \le \frac{5n}{2x}$.
    For each $i\in [t]$, let $u_i \in V(B_i)$ be a leaf of $B_i$. Now we partition $[t]$ into $\abs{V(P)} = \lceil t / (\Delta-2) \rceil$ sets, $I_1 , \dots , I_{\lceil t/(\Delta -2)\rceil}$ such that $|I_i| \leq \Delta-2$ for each $1\leq i \leq \lceil t/(\Delta-2) \rceil$.  Let $H$ be an $n$-vertex tree obtained from the disjoint union of $B_1,B_2,\ldots,B_t$, and $P$ by adding edges~$v_i u_j$ for all $1 \leq i \leq \lceil t/(\Delta-2) \rceil$ and $j \in I_i$.

    Then $B_1,\dots, B_t$ are the connected components of $H-V(P)$ and the maximum degree of $H$ is either $\Delta-1$ or $\Delta$, since $\abs{I_i}= \Delta - 2$ for some $i \in [t]$.
    Moreover, as $p \leq 1/(cn)$, 
    $n^2p =\omega(1)$, and $x \leq 5/(cnp)$, 
    we have
    \begin{equation*}
    1 \leq x \leq \frac{5n}{c} \frac{1}{n^2 p} = o(n)\text{\:\:and\:\:}npx \leq \frac{5}{c} = o(1).
    \end{equation*}
    Let $R := H \cup G(n,p)$. By Lemma~\ref{lem:example}, we deduce that whp $R-V(P)$ is a forest. 
    We may assume that $n$ is sufficiently large so that $cn^2p/\Delta >1$, because $c=\omega(1)$ and $\Delta=O\left (n^2p \right )$.
    Hence whp $R$ satisfies
    \begin{align*}
    \tw(R) & \leq \abs{V(P)} + \tw(R - V(P)) \leq 
    \left\lceil \frac{3t}{\Delta}\right\rceil + 1 
    \leq \left\lceil \frac{3cn^2p}{2\Delta}\right\rceil + 1 
    \leq 
    \frac{3c n^2 p}{\Delta} + 1,\\
    h(R) & \leq \tw(R) + 1 \leq \frac{3c n^2 p}{\Delta} + 2.
    \end{align*}
\end{EX}

\subsection{Sharpness of Theorem~\ref{thm:fragile-main}}

Examples~\ref{example1}--\ref{example4} provide best possible bounds for genus, tree-width,  and Hadwiger number for various ranges of $p$ and $\Delta$.

\begin{enumerate}[label=\rm (\alph*)]
\item Examples~\ref{example1} and~\ref{example2} show that the maximum degree bound $\Delta = O\left (n^2 p \right )$ is necessary for Theorem~\ref{thm:fragile-main}.

\item The lower bound in Theorem~\ref{thm:fragile-main}(a) is best possible by Examples~\ref{example3} and~\ref{example4}; one cannot improve Theorem~\ref{thm:fragile-main} (a) to obtain  
tree-width $\omega\left (n^2 p / \Delta \right )$ when $p \leq \varepsilon/n$ for some $\varepsilon \in (0,1)$.

\item The lower bound in Theorem~\ref{thm:fragile-main}(b) is best possible when $\Delta = O\left (\sqrt{n^2 p} \right )$. Indeed, since $G(n,p)$ has $O \left (n^2 p \right )$ edges whp and the genus increases by at most one when adding an edge, it follows that $g(R) \leq g(H) + O\left (n^2 p \right ) = \Theta \left (\max\left (g(H) , n^2 p \right ) \right )$ whp.

\item The bound in Theorem~\ref{thm:fragile-main}(c) is best possible up to logarithmic factor in $n$.
Consider a connected graph $H$ of genus $O\left (n^2 p \right )$.
Since $g(R) \leq \Theta \left (\max\left (g(H) , n^2 p \right ) \right )$ whp and the genus of $K_k$ is $\left \lceil \frac{(k-4)(k-3)}{12} \right \rceil = \Theta\left(k^2 \right)$, whp we have
\[h(R) \leq \Theta (\sqrt{g(R)}) = \Theta\left(\max \left (\sqrt{g(H)} , \sqrt{n^2 p} \right )\right) = O\left (\sqrt{n^2 p} \right ).\]
When $\Delta = O(1)$, this shows that Theorem~\ref{thm:fragile-main}(c) is best possible.
When $\Delta = \Omega\left (\sqrt{n^2 p \log \left (n^2 p \right )} \right )$, we can consider the graph $H$ in Example~\ref{example4}, which ensures whp $h(R) = \Theta\left (n^2p/\Delta \right )$ and shows that Theorem~\ref{thm:fragile-main}(c) is best possible.
\end{enumerate}

\subsection{Sharpness of Theorem~\ref{thm:spanningforest}}\label{subsec:sharp_spanningforest}

The following observation shows that Theorem~\ref{thm:spanningforest} would fail if the path cover number of $H$ is $\omega\left(n^2 p \right)$. 

\begin{OBS}\label{obs:spanningforest_fail}
Let $c = c(n)$ be a function with $c(n) = \omega(1)$, and let $p = p(n)$ be a function with $p \leq 1/(cn)$ and $n^2 p = \omega(1)$.
Then there exists an $n$-vertex graph $H$ with the path cover number at most $O\left (cn^2 p \right )$ and maximum degree $o\left (n^2 p \right )$ such that whp both $\tw(H \cup G(n,p))$ and $h(H \cup G(n,p))$ are at most~$o(c)$.
\end{OBS}
\begin{proof}
Let $c^* = c^*(n)$ be a slowly increasing function such that $c^* = \omega(1)$, $c^* = o\left (n^2 p \right )$, and $\left(c^* \right)^2 = o(c)$. 
Let $\Delta^* = \lceil n^2 p / c^* \rceil$, and let $H$ be an $n$-vertex tree in Example~\ref{example4} such that $B_1 , \dots , B_t$ are paths, and $c^*$ and $\Delta^*$ play the roles of $c$ and $\Delta$ respectively. Note that $p \leq 1/(cn) \leq 1/\left (c^* n \right)$ for all sufficiently large $n$.

Then $H$ has maximum degree at most $\Delta^* = o\left (n^2 p \right )$ and has exactly $t = \Theta\left (c^* n^2 p \right )$ leaves. Thus, the path cover number of $H$ is $\Theta \left (c^* n^2 p \right )$ (see~\cite[Claim 1.2]{GOVW2019}), which is at most $O\left (c n^2 p \right )$.
However, whp both $\tw(H \cup G(n,p))$ and $h(H \cup G(n,p))$ are at most $\frac{3 c^* n^2 p}{\Delta} + 2 = o(c)$, as $\left(c^* \right)^2 = o(c)$.
\end{proof}

\subsection{Sharpness of Lemma~\ref{lem:main}}
The key idea of Lemma~\ref{lem:main} is to obtain $m$ vertex-disjoint connected subgraphs $R_1 , \dots , R_m$ so that even though we begin with a random graph $G(n,p)$ in a subcritical regime $np < \varepsilon$ for some absolute constant $\varepsilon \in (0,1)$, whp $H \cup G(n,p)$ contains $G(m,q)$ in a supercritical regime $mq > 1 +  \varepsilon'$ as a minor, for some absolute constant $\varepsilon' > 0$ and some $q = 1 - (1 - p)^{-\Theta\left(m^2 \right)}$. 

Since $mq = \Omega(1)$ implies $m = O\left (n^2 p \right )$, one might think that the result $m = \Theta\left (n^2 p / \Delta \right)$ in Lemma~\ref{lem:main} is weak if $\Delta = \omega(1)$. 
However, we claim that the condition $m = \Theta\left (n^2 p / \Delta \right )$ is essentially best possible, as follows.

\begin{OBS}
Let $p = p(n) \leq \varepsilon/n$ for some absolute constant $\varepsilon \in (0,1)$ with $n^2 p = \omega(1)$.
Let $\zeta = \omega(1)$ be sufficiently smaller than $n^2 p$, and let $\Delta = O\left (n^2 p \right )$ which is  sufficiently larger than $\zeta$. 

If $p = \Omega(1/n)$, then let $H$ be the $n$-vertex tree in Example~\ref{example3} and let $p' = \frac{1 + \varepsilon}{2} - p$. 
Otherwise, if $p = o(1/n)$ then let~$H$ be the $n$-vertex tree in Example~\ref{example4}, where $c = c(n)$ is chosen to satisfy $c = o(\zeta)$, and let $p' = p$.

For $m = \Theta\left (\zeta n^2 p / \Delta \right )$, 
whp $H' = H \cup G(n,p)$ does not have $m$ vertex-disjoint connected subgraphs $R_1 , \dots , R_m$ with $|V(R_i)| = \Theta(n/m)$ for all $i \in [m]$.
\end{OBS}
\begin{proof}
Observe that $p' = \Theta(p)$ and $p+p' \leq \frac{1+\varepsilon}{2n}$ for $\frac{1 + \varepsilon}{2} \in (0,1)$.
For the contrary, suppose that whp $H' = H \cup G(n,p)$ has $m$ vertex-disjoint connected subgraphs $R_1 , \dots , R_m$ with $|V(R_i)| = \Theta(n/m)$ for all $i \in [m]$.
Let $p^* = 1 - (1 - p)(1 - p') \leq p + p'$. We regard $H \cup G(n,p^*) = H \cup G(n,p) \cup G(n,p') = H' \cup G(n,p')$, where $G(n,p)$ and $G(n,p')$ are independent. Then for every $i \ne j \in [m]$, the probability that there exists an edge of $G(n,p')$ between $V(R_i)$ and $V(R_j)$  is
$ 1 - (1 - p')^{|V(R_i)||V(R_j)|}$.

Let \[ q=\min_{i\neq j\in[m]} \left( 1 - (1 - p')^{|V(R_i)||V(R_j)|}\right).\]
If $p' |V(R_i)| |V(R_j)|  < 1$, then 
\[
    1 - (1 - p')^{|V(R_i)||V(R_j)|} \geq 1 - e^{-p'|V(R_i)||V(R_j)|} \geq (1 - e^{-1}) p'|V(R_i)||V(R_j)| = \Theta \left (\frac{\Delta^2}{\zeta^2 n^2 p} \right ).
\] 
If $p' |V(R_i)| |V(R_j)|  \geq 1$, then 
\[ 1 - (1 - p')^{|V(R_i)||V(R_j)|} \geq 1 - e^{-1}.\]
Therefore, $q=\Theta\left (\min\left (1-e^{-1}, \frac{\Delta^2}{\zeta^2 n^2 p} \right ) \right )$.
Note that $m=\omega(1)$
and 
$m \frac{\Delta^2}{\zeta^2 n^2 p} = \Theta(\Delta/\zeta)>2$ as $\Delta$ is chosen sufficiently larger than $\zeta$.
Thus we have $mq>2$.

Thus, we may consider $H' \cup G(n,p')$ containing  $G(m,q)$ as a minor.
By Theorem~\ref{thm:randomtw}, whp $\tw(G(m,q)) = \Omega(m)$, and therefore  
\begin{equation}\label{eqn:obs_lowerbdd}
    \tw(H \cup G(n,p^*)) = \Omega(m)= \Omega\left(\frac{\zeta n^2 p}{\Delta}\right).
\end{equation}

Note that $p^* \leq p + p' \leq \frac{1 + \varepsilon}{2n}$. If $p^* = \Omega(1/n)$, then Example~\ref{example3} shows that whp $\tw(H \cup G(n,p^*)) = O\left (n^2 p / \Delta \right )$, contradicting~\eqref{eqn:obs_lowerbdd}. On the other hand, if $p^* = o(1/n)$ then Example~\ref{example4} shows that whp $\tw\left (H \cup G\left(n,p^* \right) \right) = O\left (cn^2 p / \Delta \right )$, contradicting~\eqref{eqn:obs_lowerbdd} as we chose $c = o(\zeta)$.
\end{proof}

\section{Discussions}\label{sec:discussions}

In Theorem~\ref{thm:fragile-main}(b), if the maximum degree $\Delta$ of the base graph $H$ is $O\left (\sqrt{n^2 p} \right )$, then the lower bound $\Omega\left (n^2 p \right )$ for the genus of  $R$ is best possible. However, we could not prove whether our bound is best possible or can be improved when $\Delta = \Omega\left (\sqrt{n^2 p} \right )$.  

\begin{PROB}
Determine the asymptotic behaviour of $g(R)$ in Theorem~\ref{thm:fragile-main}(c) when $\Delta=\omega\left(\sqrt{n^2 p}\right)$.
\end{PROB}

To obtain the lower bound in Theorem~\ref{thm:fragile-main}(c), we found many connected subgraphs of comparable sizes and estimated the genus of a minor obtained by contracting these connected subgraphs. This strategy is effective for $\Delta = O\left(\sqrt{n^2 p}\right)$, as each of these connected subgraphs contains only a few edges, which does not significantly affect our estimation. However, when $\Delta = \Omega\left(\sqrt{n^2 p}\right)$, there may be many edges in each of these connected subgraphs, suggesting that a novel method is needed to account for these edges.

A curious reader might ask whether the results corresponding to Theorems~\ref{thm:fragile-main}(b) and~\ref{thm:spanningforest}(b) also hold for~\emph{non-orientable genus}, where the non-orientable genus of a graph $G$ is the minimum number of cross-caps to be added to the sphere such that $G$ can be embedded without any crossings.
Indeed, Theorem~\ref{thm:randomgenus} holds for non-orientable genus as well (see~\cite[Section 8]{dowden2019}), and therefore, we can obtain bounds similar to Theorems~\ref{thm:fragile-main}(b) and~\ref{thm:spanningforest}(b) for non-orientable genus.

\paragraph{Acknowledgement.} 
The authors would like to thank the anonymous reviewers for their careful reading and helpful comments. 
Part of this work was conducted when Mihyun Kang visited the Institute for Basic Science (IBS) and her special thanks go to the Discrete Mathematics Group of the IBS.

\providecommand{\bysame}{\leavevmode\hbox to3em{\hrulefill}\thinspace}
\providecommand{\MR}{\relax\ifhmode\unskip\space\fi MR }
\providecommand{\MRhref}[2]{%
  \href{http://www.ams.org/mathscinet-getitem?mr=#1}{#2}
}
\providecommand{\href}[2]{#2}

\newpage
\appendix
\section*{Appendix. Minor-monotone graph parameters of random graphs}
The following table summarizes interesting results of minor-monotone graph parameters of random graphs.
\begin{table}[h]
\begin{center}
\footnotesize
\renewcommand\arraystretch{1.3}
\begin{tabular}{ cp{0.35\textwidth}p{0.35\textwidth}c  }
  \toprule
\textbf{Parameters} & \textbf{Values in $G(n,p)$ (whp)} & \textbf{Range of $p$} & \textbf{Ref}\\
\midrule
\multirow{2}{0.1\textwidth}{Tree-width $\tw$} 
& $\tw \leq 2$ & $p \leq c/n$ $(0<c<1)$ & Folklore\\ \cline{2-4}
& $\tw = \Theta(n)$ & $p \geq c/n$ $(c>1)$ & \cite{lee2012} \\ 
\midrule
\multirow{7}{0.1\textwidth}{Genus $g$} 
& $g=0$ & $p = n^{-1} - \omega(n^{-4/3})$ & \cite{luczak1994} \\ \cline{2-4}
& $g = (1+o(1)){c^3 n^4}/{3}$ & $p = n^{-1} + c$ and $n^{-4/3} \ll s \ll n^{-1}$ & \cite{dowden2019}\\ \cline{2-4}
& $g = (1+o(1)) \mu(c) {n^2 p}/{2}$ & $np=c$ and $c>1$, where $\lim_{c \to 1} \mu(c)=0$ and $\lim_{c \to \infty}\mu(c) = 1/2$ & \cite{dowden2019} \\\cline{2-4} 
& $(1-o(1)) {n^2p}/{4} \leq g \leq {n^2 p}/{4}$ & $1 \ll np = n^{o(1)}$ & \cite{dowden2019} \\ \cline{2-4}
& $(1+o(1)) \max \left( \frac{1}{12}, \frac{(j-1)}{4(j+1)} \right) n^2 p $\hfill \mbox{}\linebreak $\leq g \leq (1+o(1)) \frac{jn^2 p}{4(j+2)}$ & $np = \Theta(n^{1/j})$ & \cite{rodl1995} \\ \cline{2-4}
& $g = (1+o(1)) \frac{jn^2 p}{4(j+2)}$ & $n^{\frac{1}{j+1}} \ll np \ll n^{\frac{1}{j}} \ (j\in \mathbb N)$ & \cite{rodl1995} \\ \cline{2-4}
& $g = (1+o(1)) {n^2 p}/{12}$ & $p = \Theta(1)$ & \cite{rodl1995} \\ 

\midrule
\multirow{3}{0.1\textwidth}{Hadwiger number $h$} 
& $h = \Theta(c(n)^{3/2})$ & $p = n^{-1} + c(n) n^{-4/3}$, where $c(n) = \omega(1)$ but $c(n) = o(n^{1/3})$ & \cite{fountoulakis2009}\\ \cline{2-4}
& $\delta(c) \sqrt{n} \leq h \leq 2 \sqrt{cn}$ for some constant $\delta(c)$& $p \geq c/n$ and $c>1$
& \cite{fountoulakis2008} \\ \cline{2-4}
& $\frac{(1 - \varepsilon)n}{\sqrt{\log_{1/(1-p)} (np)}} \leq h \leq  \frac{(1 + \varepsilon)n}{\sqrt{\log_{1/(1-p)} (np)}}$ & $C(\varepsilon)/n \leq p \leq 1 - \varepsilon$ & \cite{bollobas1980, fountoulakis2008} \\
\bottomrule
\end{tabular}
\end{center}
\caption{Summary of minor-monotone parameters of random graphs.} \label{fig:table1}
\end{table}
\end{document}